\theoremstyle{definition}
\newtheorem{theorem}{Theorem}[section]
\newtheorem{lemma}{Lemma}[section]
\newtheorem{example}{Example}
\numberwithin{equation}{section}
\numberwithin{figure}{section}
\title{Partially Explicit Time Discretization for Nonlinear Time Fractional Diffusion Equations}
\author{Wenyuan Li\footnote{Department of Mathematics, Texas A\&M University, College Station, TX 77843, USA. (\texttt{E-mail: wenyuanli@tamu.edu})}, ~Anatoly Alikhanov\footnote{North-Caucasus Center for Mathematical Research, North-Caucasus Federal University,  Stavropol, 355017  Russia. (\texttt{E-mail: aaalikhanov@gmail.com})}, ~Yalchin Efendiev\footnote{Department of Mathematics, Texas A\&M University, College Station, TX 77843, USA \& North-Eastern Federal University, Yakutsk, Russia. (\texttt{E-mail: efendiev@math.tamu.edu})}, ~ Wing Tat Leung\footnote{Department of Mathematics, University of California, Irvine, USA. (\texttt{E-mail: wtleung@uci.edu})}}
\begin{document}
\maketitle

\section*{Abstract}
Nonlinear time fractional partial differential equations are widely used in modeling and simulations. In many applications, there are high contrast changes in media properties. For solving these problems, one often uses coarse spatial
grid for spatial resolution. For temporal discretization, implicit methods
are often used.
For implicit methods, though the time step can be relatively large, 
the equations are difficult to compute due to the nonlinearity and 
the fact that one deals with large-scale systems. 
On the other hand, the discrete system in explicit methods are easier to 
compute but it requires small time steps. 
In this work, we propose the partially explicit scheme following 
earlier works on developing partially explicit methods for nonlinear diffusion equations. In this scheme, the diffusion term is treated partially explicitly and the reaction term is treated fully explicitly. With the appropriate construction of spaces and stability analysis, we find that the required time step in our proposed scheme scales as the coarse mesh size, which creates a great saving in computing.  The main novelty of this work is the extension
of our earlier works for diffusion equations to time fractional diffusion
equations. For the case of fractional diffusion equations, the constraints on
time steps are more severe and the proposed methods alleviate this
since the time step in partially explicit method scales as the coarse mesh size.
We present stability results. Numerical results are presented where
we compare our proposed partially explicit methods with a fully implicit 
approach. We show that our proposed approach provides similar results, while
treating many degrees of freedom in nonlinear terms explicitly.

\section{Introduction}

Multiscale features arise in many problems such as the flow in porous media. The multiscale nature appears because of the variations in media properties.  For instance, the permeability field can vary at different scales and have high contrast. There are many multiscale methods developed for parabolic partial differential equations. However, there are only a small amount of research on multiscale time fractional diffusion reaction equations while they have many applications.

Because of the successful applications of fractional calculus in many different fields \cite{Kilbas2006TheoryAA,Hilfer2000ApplicationsOF,Oldham1974TheFC,Podlubny1999FractionalDE}, equations with fractional derivatives are discussed in many recent research. 
Fractional derivatives are used for describing processes that  lead
 to  equations of fractional orders.
Many approaches have been put forward to numerically solve the time fractional partial differential equations. The partially explicit and splitting scheme for linear time fractional partial differential equations are developed in \cite{hu2021partially}. This paper is a continuation of the linear case and designs the partially explicit scheme for time fractional nonlinear diffusion reaction equations.

In previous works, there are many developed spatial approaches for multiscale linear and nonlinear partial differential equations. For linear problems, many multiscale algorithms have been put forward. 
These include
homogenization-based approaches \cite{eh09,le2014msfem}, 
multiscale
finite element methods \cite{eh09,hw97,jennylt03}, 
generalized multiscale finite element methods (GMsFEM) \cite{chung2016adaptiveJCP,MixedGMsFEM,WaveGMsFEM,chung2018fast,GMsFEM13}, 
constraint energy minimizing GMsFEM (CEM-GMsFEM) \cite{chung2018constraint, chung2018constraintmixed}, nonlocal
multi-continua (NLMC) approaches \cite{NLMC},
metric-based upscaling \cite{oz06_1}, heterogeneous multiscale method 
\cite{ee03}, localized orthogonal decomposition (LOD) 
\cite{henning2012localized}, equation-free approaches \cite{rk07,skr06}, 
multiscale stochastic approaches \cite{hou2017exploring, hou2019model, hou2018adaptive},
and hierarchical multiscale method \cite{brown2013efficient}.
Algorithms such as GMsFEM \cite{chung2018constraint} and NLMC are developed for high-contrast problems to capture the multiscale features
\cite{chung2018constraintmixed, NLMC}. 
These algorithms need a careful construction of multiscale
dominant modes. For nonlinear problems, we can use nonlinear maps to replace linear multiscale basis functions \cite{ep03a,ep03c,efendiev2014generalized}.

Splitting methods for nonlinear PDEs are extensively studied in the literature. Earlier approaches include implicit-explicit approaches and other techniques
\cite{ascher1997implicit,li2008effectiveness,abdulle2012explicit,engquist2005heterogeneous,ariel2009multiscale,narayanamurthi2019epirk,shi2019local,duchemin2014explicit,frank1997stability,izzo2017highly,ruuth1995implicit,hundsdorfer2007imex,du2019third}.
In recent works, splitting methods are designed to solve the linear and nonlinear parabolic partial differential equations \cite{chung_partial_expliict21,chung2021contrastindependent}. 
This paper applies the splitting method to time fractional partial differential equations.

Our work starts with the nonlinear time fractional partial differential 
equation
\[ \cfrac{\partial^{\alpha} u}{\partial t^{\alpha}} + f(u) + g(u) = 0,   \]
where $f(u)$ is the diffusion operator and $g(u)$ is the reaction term. $f(u)$ is contrast dependent while $g(u)$ is not and represents reaction terms. Based on properties of $f(u)$ and $g(u)$, we formulate a condition for time step so that our proposed partially explicit scheme is stable. This condition is similar to that of the nonlinear non-fractional PDE \cite{chung2021contrastindependent}. By careful construction of spaces, the stability condition is contrast-independent \cite{chung_partial_expliict21}. We also find that the restrictive time step scales as the coarse mesh size and thus can be much larger.
Our proposed approach provides explicit treatment for nonlinear
terms ($f(u)$ and $g(u)$), while we still have implicit 
coupling in time derivative part of the discrete system. The
coupling in time derivative
can be removed by designing some mass lumping (see \cite{chung2021contrast}).
However, the design of mass lumping for time fractional diffusion is
challenging and we leave it for future studies. In general, one can
easily solve the coupled mass matrix and the main difficulty is
in handling high contrast diffusion equation, which we handle via 
our partial explicit method.

The main findings of our paper are as follows.
\begin{itemize}

\item We propose a partially explicit scheme for time fractional
nonlinear equations.

\item The time step in our method scales as the coarse mesh size.

\item The stability analysis is performed for time fractional case.

\end{itemize}

We present numerical results. We consider high contrast linear
and nonlinear diffusion operators
and nonlinear reaction terms. We identify coarse degrees of freedom
and solve them in nonlinear systems implicitly. Other degrees of freedom
are treated explicitly. We consider two types of source terms.
The first source term is a smooth function and the second source term
is a singular functions with locally supported jump. In all cases,
we compare our proposed method with the fully implicit method. 
We show that our proposed method gives similar accuracy while
treating additional degrees of freedom in nonlinear terms explicitly.

This paper is organized as follows. In the next section, we present the problem setting. In Section 3, we provide three schemes and the corresponding stability analysis. In Section 4, we discuss the construction of spaces and present numerical results.

\section{Problem Setting}

Let $\Omega$ be a bounded domain in $\mathbb{R}^d$ ($d=1,2,3$) and the boundary $\partial \Omega$ is sufficiently smooth.
We want to discuss the 
time fractional nonlinear diffusion reaction equation
\begin{align}
    \frac{\partial^{\alpha} u(x,t) }{\partial t^{\alpha}} + f(u) + g(u)
      = 0.
      \label{eq0}
\end{align} 

We let $t_0 = 0$ be the initial time and $T$ be the final time.
We define $\Delta t = \frac{T}{K}$ ($\Delta t << T $), $t_j = j\cdot \Delta t$ and $u^j = u(t_j)$, $0\leq j \leq K$.
Let $b_k = (k+1)^{1-\alpha} - k^{1-\alpha}$ and 
$\alpha_0 = \Gamma(2-\alpha) (\Delta t)^{\alpha}$.
Discretize the time fractional derivative as in \cite{LIN20071533}, we have
\begin{align*}
\frac{\partial^{\alpha} u(x,t_{k+1})}{\partial t^{\alpha}} 
=& \frac{1}{\Gamma(1-\alpha)} \int_0^{t_{k+1}} \frac{\partial u(x,s)}{\partial s} \frac{1}{(t_{k+1}-s)^{\alpha}} ds \\
=& \frac{1}{\Gamma (1-\alpha)} \sum_{j=0}^k \int^{t_{j+1}}_{t_{j}} 
   \frac{\partial u(x,s)}{\partial s} \frac{1}{(t_{k+1}-s)^{\alpha}} ds\\
\approx & \frac{1}{\Gamma (1-\alpha)} \sum_{j=0}^k 
  \frac{u(t_{j+1}) - u(t_j)}{\Delta t} \int^{t_{j+1}}_{t_{j}} 
    \frac{1}{(t_{k+1}-s)^{\alpha}} ds \\
=& \frac{1}{\Gamma(1-\alpha) \Delta t}  \sum_{j=0}^k (u(t_{j+1}) - u(t_j)) \Big(\frac{-1}{1-\alpha}    (t_{k+1}-s)^{1-\alpha} \Big) \Big|_{t_j}^{t_{j+1}} \\
=& \frac{1}{(1-\alpha)\Gamma(1-\alpha)\Delta t} \sum_{j=0}^k 
 (u(t_{j+1}) - u(t_j)) ((t_{k+1}-t_j)^{1-\alpha} - (t_{k+1}-t_{j+1})^{1-\alpha})\\
=& \frac{1}{\Gamma(2-\alpha)\Delta t} 
\sum_{j=0}^k (u(t_{j+1}) - u(t_j)) (\Delta t)^{1-\alpha}
((k+1-j)^{1-\alpha} - (k-j)^{1-\alpha} )\\
=& \frac{1}{\Gamma(2-\alpha)(\Delta t)^{\alpha}} 
\sum_{j=0}^k b_{k-j} (u(t_{j+1}) - u(t_j))\\
=&\frac{1}{\alpha_0} \sum_{j=0}^k b_{k-j} (u^{j+1} - u^j).
\end{align*}

Now Equation (~\ref{eq0}) becomes 
\begin{align*}
    \frac{1}{\alpha_0} \sum_{j=0}^k b_{k-j} (u^{j+1} - u^j) 
    + f(u) + g(u) = 0,
\end{align*}
where $f = \cfrac{\delta F}{\delta u}$ and $g = \cfrac{\delta G}{\delta u}$.
$f$ is the contrast dependent term (which introduces stiffness) while $g$ does not depend on the contrast. 
Let $V$ be a Hilbert space.
We assume that $f(u)\in V^{\ast}$ and $g(u) \in L^2(\Omega)$ for all $u\in V$.
For simplicity, we use $(\cdot,\cdot)$ to denote $(\cdot,\cdot)_{V^{\ast},V}$.
To prove the stability of our schemes, we need to make some assumptions on $F$ and $G$.  We use $\|\cdot\|$ to denote $\|\cdot\|_{L^2}$.
\begin{itemize}
    \item {The second variational derivative of $F$ and $G$ satisfy} 
    \begin{align*}
    \delta^2 F(u) ( v, v) \geq c(u)\|v\|_V^2, \quad \forall u , v \in V, \\
    \delta^2 G(u) ( v, v) \geq b(u)\|v\|^2, \quad \forall u , v \in V,
\end{align*}
where $0 \leq c(u) < \infty$ and $-\underline{b} \leq b(u) < \infty$ 
($\underline{b}>0$) do not depend on $v$.
\item {The second variational derivative of $F$ and $G$ is bounded}
\begin{align*}
    |\delta^2 F(u) ( w,v )| \leq C(u) \|v\|_V \|w\|_V, \quad \forall u,v,w \in V,\\
    |\delta^2 G(u) ( w,v )| \leq B \|v\| \|w\|, \quad \forall u,v,w \in V,
\end{align*}
where $0<C(u)<\infty$ and $0<B<\infty$ do not depend on $v$, $w$.
\end{itemize}
\begin{example}
For $ F = \cfrac{1}{2} \int_{\Omega} \kappa |\nabla u |^2$, we have 
\[ \left( \cfrac{\delta F}{ \delta u } , v \right) = 
    \int_{\Omega} \kappa \nabla u \cdot \nabla v \]
    and 
    \[ \delta^2 F(u) (w,v) = \int_{\Omega} \kappa \nabla v \cdot \nabla w, \quad \forall u \in V.  \]
    In this case, the norm we use is $ \|v\|_V = \| \kappa^{\frac{1}{2}} \nabla v \|$. We find that 
    \[ c(u) = C(u) \equiv 1. \]
\end{example}
We also need the following lemma to prove stability (see \cite{hu2021partially}).
\begin{lemma} 
Let $N\in \mathbb{N}$. We have
\begin{align*}
    \sum_{k=0}^N \sum_{j=0}^k b_{k-j} (u^{j+1}-u^j,u^{k+1}-u^k )
    \geq \frac{1}{2} \sum_{k=0}^N \|u^{k+1}-u^k\|^2.
\end{align*}
\label{lem1}
\end{lemma}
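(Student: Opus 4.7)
The plan is to recast the left-hand side as a symmetric Toeplitz quadratic form and establish its non-negativity via a Laplace-type (completely monotone) representation of the coefficients $b_n$. Set $w^k := u^{k+1}-u^k$ and let $S = \sum_{k=0}^N \sum_{j=0}^k b_{k-j}(w^j, w^k)$. Symmetrizing the triangular index set across the diagonal (and using $b_0=1$) yields the identity
\begin{equation*}
2S = \sum_{k=0}^N \|w^k\|^2 + \sum_{k,j=0}^N b_{|k-j|}(w^j, w^k),
\end{equation*}
so the lemma is equivalent to the non-negativity of $Q := \sum_{k,j=0}^N b_{|k-j|}(w^j, w^k)$; equivalently, the scalar symmetric Toeplitz matrix $B=(b_{|k-j|})_{k,j=0}^N$ must be positive semi-definite (then $Q\geq 0$ follows by expanding the $w^k$ in an orthonormal basis of their span in $L^2(\Omega)$).

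The heart of the proof is the PSD claim for $B$. I would derive the integral representation
\begin{equation*}
b_n = (1-\alpha)\int_n^{n+1} s^{-\alpha}\, ds = \int_0^\infty e^{-nt}\, d\mu(t),\qquad d\mu(t) = \frac{1-\alpha}{\Gamma(\alpha)}\,(1-e^{-t})\,t^{\alpha-2}\,dt,
\end{equation*}
starting from $s^{-\alpha} = \Gamma(\alpha)^{-1}\int_0^\infty e^{-st}t^{\alpha-1}\,dt$ and swapping the order of integration; the measure $\mu$ is non-negative on $(0,\infty)$. Then for any scalars $\xi_0,\dots,\xi_N$,
\begin{equation*}
\sum_{k,j=0}^N b_{|k-j|}\xi_k\xi_j = \int_0^\infty \Big(\sum_{k,j=0}^N \rho^{|k-j|}\xi_k\xi_j\Big)\, d\mu(t),\qquad \rho=e^{-t}\in(0,1),
\end{equation*}
and each inner finite Toeplitz form is non-negative because the bi-infinite sequence $(\rho^{|n|})$ consists of the Fourier coefficients of the non-negative Poisson kernel $(1-\rho^2)/(1-2\rho\cos\theta+\rho^2)$. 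Integrating against the non-negative measure $\mu$ gives $B\succeq 0$, hence $Q\geq 0$, hence $S\geq \tfrac{1}{2}\sum_k \|w^k\|^2$.

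The main obstacle is precisely this PSD step. The off-diagonal entries $b_n\sim (1-\alpha)n^{-\alpha}$ decay only mildly and their cumulative row sums $\sum_{m=1}^{k}b_m=k^{1-\alpha}$ diverge, so no pointwise Young or Cauchy--Schwarz inequality can absorb the cross terms into $\sum_k\|w^k\|^2$; one really must exploit the global completely monotone structure of $(b_n)$. An alternative route is Young's classical theorem that a positive convex sequence decreasing to zero generates a non-negative Fourier cosine series, which applies since the derivatives of $s^{1-\alpha}$ show that $b_n$ is positive, decreasing, convex, and tends to $0$; however the Laplace-transform route is more self-contained and yields the conclusion immediately via integration against a non-negative measure.
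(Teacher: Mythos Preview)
Your proof is correct and follows essentially the same route as the paper: the symmetrization identity $2S=\sum_k\|w^k\|^2+\sum_{k,j}b_{|k-j|}(w^j,w^k)$ together with positive semi-definiteness of the Toeplitz matrix $(b_{|k-j|})$ via a Bernstein/Laplace representation of the completely monotone sequence $(b_n)$. The only cosmetic difference is that the paper invokes the Hausdorff--Bernstein--Widder theorem abstractly for $(k+s)^{-\alpha}$ and then integrates in $s$, whereas you compute the explicit representing measure $d\mu(t)=\frac{1-\alpha}{\Gamma(\alpha)}(1-e^{-t})t^{\alpha-2}\,dt$ and justify the PSD of $(\rho^{|k-j|})$ via the Poisson kernel; both arrive at the same conclusion.
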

\begin{proof}
By definition of $b_k$, we have
\begin{align*}
    b_k = (k+1)^{1-\alpha} - k^{1-\alpha} 
        = \frac{1}{1-\alpha} \int_0^1 (k+s)^{-\alpha} ds. 
\end{align*}
Note that $\theta(k) = (k+s)^{-\alpha}$ is a complete monotonic function. 
By Hausdorff-Bernstein-Widder theorem, we obtain 
\begin{align*}
    (k+s)^{-\alpha} = \int_0^{\infty} e^{-k\tau } dg_s(\tau), 
\end{align*}
where $g_s$ is a cumulative distribution function.
Thus, we have 
\begin{align*}
    &\quad \sum_{k=0}^N \sum_{j=0}^N b_{|k-j|}(u^{j+1}-u^j,u^{k+1}-u^k) \\
    &= \frac{1}{1-\alpha} \int_{\Omega} \int_0^1 \int_0^{\infty} 
    \sum_{k=0}^N \sum_{j=0}^N e^{-|k-j|\tau}(u^{j+1}-u^j)(u^{k+1}-u^k) dg_s(\tau)ds dx.
\end{align*}
Let $M$ be a matrix with $M_{k,j} = e^{-|k-j| t}$. Then $M$ is positive definite for $t>0$. Thus, we get
\begin{align*}
    \sum_{k=0}^N \sum_{j=0}^N b_{|k-j|}(u^{j+1}-u^j,u^{k+1}-u^k) >0.
\end{align*}
With this inequality, we find that
\begin{align*}
    &\sum_{k=0}^N \sum_{j=0}^k b_{k-j} (u^{j+1}-u^j,u^{k+1}-u^k ) \\
    = & \frac{1}{2} \sum_{k=0}^N (u^{k+1}-u^k, u^{k+1}-u^k) 
    + \frac{1}{2} \sum_{k=0}^N \sum_{j=0}^N b_{|k-j|}(u^{j+1}-u^j,u^{k+1}-u^k)\\
    \geq & \frac{1}{2} \sum_{k=0}^N (u^{k+1}-u^k, u^{k+1}-u^k)
    = \frac{1}{2} \sum_{k=0}^N \|u^{k+1}-u^k\|^2.
\end{align*}
\end{proof}

\section{Three Schemes and Stability}
\subsection{Implicit Scheme}
Implicit scheme is also called backward Euler scheme. 
Let $V_H \subset H^1_0(\Omega)$ be the finite element space.
In this case, we consider $\{u_k\}_{k=0}^K \subset V_H$ such that:
\begin{align}
    ( \frac{1}{\alpha_0} \sum_{j=0}^k b_{k-j} ( u^{j+1}-u^j ), v  )
     + ( f(u^{k+1}) + g(u^{k+1}) , v ) = 0, \quad\forall v\in V_H.
     \label{ImplicitSys}
\end{align}
Now we want to show the stability of this implicit case.
\begin{theorem}
If $\alpha_0 \leq \cfrac{1}{\underline{b}}$, 
then the implicit scheme (\ref{ImplicitSys}) is stable with
\[ F(u^{N+1}) + G(u^{N+1}) \leq F(u^{0}) + G(u^{0}), \]
where $0\leq N \leq K-1$ and $N\in\mathbb{N}$.
\end{theorem}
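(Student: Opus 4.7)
The plan is to test the implicit scheme (\ref{ImplicitSys}) against the natural energy increment $v = u^{k+1}-u^k$, sum the resulting identity over $k=0,\dots,N$, and then bound the left-hand side from below using Lemma \ref{lem1} and the right-hand side from above using convexity/semi-convexity information on $F$ and $G$ coming from the bounds on $\delta^2 F$ and $\delta^2 G$.

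First I would handle the nonlinear terms. Since $\delta^2 F(u)(v,v)\ge c(u)\|v\|_V^2\ge 0$, the functional $F$ is convex, so a one-sided Taylor expansion gives
\begin{equation*}
F(u^{k+1})-F(u^{k}) \leq \bigl(f(u^{k+1}),\,u^{k+1}-u^{k}\bigr).
\end{equation*}
The functional $G$ is only semi-convex because $\delta^2 G(u)(v,v)\ge -\underline{b}\|v\|^2$; a second-order Taylor expansion of $G(u^k)$ around $u^{k+1}$ therefore yields
\begin{equation*}
G(u^{k+1})-G(u^{k}) \leq \bigl(g(u^{k+1}),\,u^{k+1}-u^{k}\bigr) + \tfrac{\underline{b}}{2}\,\|u^{k+1}-u^{k}\|^{2}.
\end{equation*}
Summing these inequalities over $k=0,\dots,N$ gives a telescoping bound on $F(u^{N+1})+G(u^{N+1})-F(u^{0})-G(u^{0})$ in terms of $\sum_k (f(u^{k+1})+g(u^{k+1}),u^{k+1}-u^k)$ plus a $\tfrac{\underline{b}}{2}\sum_k\|u^{k+1}-u^k\|^2$ remainder.

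Next I would use the scheme itself to rewrite $(f(u^{k+1})+g(u^{k+1}),u^{k+1}-u^k) = -\tfrac{1}{\alpha_0}\sum_{j=0}^{k} b_{k-j}(u^{j+1}-u^j,\,u^{k+1}-u^k)$, and then invoke Lemma \ref{lem1} on the double sum to obtain
\begin{equation*}
\sum_{k=0}^{N}\bigl(f(u^{k+1})+g(u^{k+1}),\,u^{k+1}-u^{k}\bigr) \leq -\frac{1}{2\alpha_0}\sum_{k=0}^{N}\|u^{k+1}-u^{k}\|^{2}.
\end{equation*}
Combining with the Taylor bound produces
\begin{equation*}
F(u^{N+1})+G(u^{N+1}) - F(u^{0}) - G(u^{0}) \leq \tfrac{1}{2}\Bigl(\underline{b}-\tfrac{1}{\alpha_0}\Bigr)\sum_{k=0}^{N}\|u^{k+1}-u^{k}\|^{2},
\end{equation*}
and under the hypothesis $\alpha_0\le 1/\underline{b}$ the right-hand side is nonpositive, which is exactly the desired energy estimate.

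The step I expect to be the main obstacle, or at least the most delicate, is the treatment of the non-convex reaction functional $G$: because $b(u)$ is only bounded below by $-\underline{b}$, the second-order remainder in the Taylor expansion cannot be discarded and must be absorbed against the positive contribution produced by Lemma \ref{lem1}. Making this absorption rigorous requires the Taylor expansion to use an intermediate point between $u^k$ and $u^{k+1}$ (so that the bound $\delta^2 G(\xi)(w,w)\ge -\underline{b}\|w\|^2$ applies uniformly), and it is this remainder that dictates the threshold $\alpha_0\le 1/\underline{b}$ in the theorem.
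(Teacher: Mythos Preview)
Your proposal is correct and follows essentially the same route as the paper: test with $v=u^{k+1}-u^{k}$, use second-order Taylor expansions of $F$ and $G$ about $u^{k+1}$ (with intermediate points $\xi_i^k$), apply Lemma~\ref{lem1} to the memory term, and absorb the $-\underline{b}$ contribution from $\delta^2 G$ using the $\tfrac{1}{2\alpha_0}\sum_k\|u^{k+1}-u^k\|^2$ produced by the lemma. The only cosmetic difference is that the paper keeps the nonnegative $\tfrac{c(\xi_1^k)}{2}\|u^{k+1}-u^k\|_V^2$ term explicitly before discarding it, whereas you fold it directly into the convexity inequality $F(u^{k+1})-F(u^{k})\le (f(u^{k+1}),u^{k+1}-u^{k})$; this is the same argument.
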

\begin{proof}
Let $v = u^{k+1}-u^k $.
As \begin{align*}
    F(u^{k+1}) = F(u^k) + (f(u^{k+1}) , u^{k+1}-u^k) 
    - \frac{1}{2} \delta^2 F(\xi_1^k) (u^{k+1}-u^k, u^{k+1}-u^k),\\
    G(u^{k+1}) = G(u^k) + (g(u^{k+1}) , u^{k+1}-u^k) 
    - \frac{1}{2} \delta^2 G(\xi_2^k) (u^{k+1}-u^k, u^{k+1}-u^k),
\end{align*}
where $\xi_i^k = \lambda_i^k u^k + (1-\lambda_i^k) u^{k+1}$ with 
$\lambda_i^k \in [0,1]$ and $i\in\{1,2\}$. By Lemma \ref{lem1} and assumptions on $F$ and $G$, we have 
\allowdisplaybreaks
\begin{align*}
    0 &= \frac{1}{\alpha_0} \sum_{k=0}^N \sum_{j=0}^k b_{k-j} (u^{j+1}-u^j,u^{k+1}-u^k ) + \sum_{k=0}^N ( f(u^{k+1}) + g(u^{k+1}) , u^{k+1}-u^k)\\
     &= \frac{1}{\alpha_0} \sum_{k=0}^N \sum_{j=0}^k b_{k-j} 
     (u^{j+1}-u^j,u^{k+1}-u^k )
     + \sum_{k=0}^N [F(u^{k+1}) - F(u^k) + \frac{1}{2} 
     \delta^2 F(\xi_1^k) (u^{k+1}-u^k,u^{k+1}-u^k) \\
      & \quad\quad\quad\quad\quad\quad\quad\quad\quad\quad\quad
      \quad\quad\quad\quad\quad\quad\quad\quad\quad
      + G(u^{k+1}) - G(u^k) + \frac{1}{2}\delta^2 G(\xi_2^k)
      (u^{k+1}-u^k,u^{k+1}-u^k) ]\\
      &\geq \frac{1}{2\alpha_0} \sum_{k=0}^N \|u^{k+1}-u^k\|^2 
      + \sum_{k=0}^N [F(u^{k+1}) - F(u^k) + 
      \frac{c(\xi_1^k)}{2} \|u^{k+1}-u^k\|_V^2 \\
      & \quad\quad\quad\quad\quad\quad\quad\quad\quad\quad\quad\quad
      + G(u^{k+1}) - G(u^k) + \frac{b(\xi_2^k)}{2}\|u^{k+1}-u^k\|^2 ]\\
      &\geq (\frac{1}{2\alpha_0} - \frac{\underline{b}}{2} )
      \sum_{k=0}^N \|u^{k+1}-u^k\|^2 + 
      F(u^{N+1}) - F(u^0) + G(u^{N+1}) - G(u^0)+0.
\end{align*}

Thus, if $\alpha_0 \underline{b} \leq 1$, we have
\begin{align*}
    F(u^{N+1}) + G(u^{N+1}) \leq F(u^{0}) + G(u^{0}).
\end{align*}
\end{proof}

\subsection{Explicit Scheme}
We consider $\{u^k\}_{k=0}^K\subset V_H$ such that
\begin{align}
    ( \frac{1}{\alpha_0} \sum_{j=0}^k b_{k-j} ( u^{j+1}-u^j ), v  )
     + ( f(u^{k}) + g(u^{k}) , v ) = 0,\quad\forall v\in V_H.
     \label{ExplicitSys}
\end{align}
\begin{theorem}
If $\alpha_0 \Big(\cfrac{C(\xi) \|u^{k+1}-u^k\|^2_V }{\|u^{k+1}-u^k\|^2} + B \Big) \leq 1$ for any $\xi = (1-\lambda) u^{k+1} + \lambda u^k $ with $\lambda \in [0,1]$ and $0 \leq k\leq N$, then the explicit scheme (\ref{ExplicitSys}) is stable with
\[ F(u^{N+1}) + G(u^{N+1}) \leq F(u^{0}) + G(u^{0}), \]
where $0\leq N \leq K-1$ and $N\in\mathbb{N}$.
\end{theorem}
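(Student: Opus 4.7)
The plan is to mirror the implicit proof but to use the Taylor expansion in the \emph{opposite} direction: since the nonlinear terms are now evaluated at $u^k$ rather than at $u^{k+1}$, we expand $F(u^{k+1})$ and $G(u^{k+1})$ about $u^k$, which flips the sign of the remainder compared with the implicit analysis. Concretely, write
\begin{align*}
F(u^{k+1}) &= F(u^k) + (f(u^k),u^{k+1}-u^k) + \tfrac{1}{2}\delta^2 F(\xi_1^k)(u^{k+1}-u^k,u^{k+1}-u^k),\\
G(u^{k+1}) &= G(u^k) + (g(u^k),u^{k+1}-u^k) + \tfrac{1}{2}\delta^2 G(\xi_2^k)(u^{k+1}-u^k,u^{k+1}-u^k),
\end{align*}
with $\xi_i^k=\lambda_i^k u^k+(1-\lambda_i^k)u^{k+1}$, $\lambda_i^k\in[0,1]$. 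Solving for the nonlinear inner products lets us rewrite $(f(u^k)+g(u^k),u^{k+1}-u^k)$ as a telescoping difference minus a quadratic correction.

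Next, I would take the test function $v=u^{k+1}-u^k$ in the explicit scheme (\ref{ExplicitSys}) and sum from $k=0$ to $N$. Lemma \ref{lem1} handles the double sum in the fractional derivative term, giving
\[
\frac{1}{\alpha_0}\sum_{k=0}^N\sum_{j=0}^k b_{k-j}(u^{j+1}-u^j,u^{k+1}-u^k)\;\ge\;\frac{1}{2\alpha_0}\sum_{k=0}^N\|u^{k+1}-u^k\|^2.
\]
Substituting the Taylor identities, the telescoping yields $F(u^{N+1})-F(u^0)+G(u^{N+1})-G(u^0)$ together with a sum of the form $-\tfrac{1}{2}\sum_k[\delta^2 F(\xi_1^k)+\delta^2 G(\xi_2^k)](u^{k+1}-u^k,u^{k+1}-u^k)$ on the left-hand side.

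The crucial step, and the only place where the analysis really differs from the implicit case, is that these quadratic remainders now carry an \emph{unfavorable} sign, so I must bound them from above using the boundedness hypotheses
\[
|\delta^2 F(\xi_1^k)(u^{k+1}-u^k,u^{k+1}-u^k)|\le C(\xi_1^k)\|u^{k+1}-u^k\|_V^2,\qquad
|\delta^2 G(\xi_2^k)(u^{k+1}-u^k,u^{k+1}-u^k)|\le B\|u^{k+1}-u^k\|^2.
\]
Combining everything gives
\[
F(u^{N+1})+G(u^{N+1})-F(u^0)-G(u^0)\;\le\;\tfrac{1}{2}\sum_{k=0}^N\Bigl[C(\xi_1^k)\|u^{k+1}-u^k\|_V^2+B\|u^{k+1}-u^k\|^2-\tfrac{1}{\alpha_0}\|u^{k+1}-u^k\|^2\Bigr].
\]

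Finally, the hypothesis $\alpha_0\bigl(C(\xi)\|u^{k+1}-u^k\|_V^2/\|u^{k+1}-u^k\|^2+B\bigr)\le 1$ is precisely what makes each summand on the right nonpositive (after dividing through by $\|u^{k+1}-u^k\|^2$), which immediately yields $F(u^{N+1})+G(u^{N+1})\le F(u^0)+G(u^0)$. The main obstacle, compared to the implicit theorem, is conceptual rather than technical: one has to recognize that the loss of the monotone term $+\tfrac{c}{2}\|\cdot\|_V^2$ produced by $f(u^{k+1})$ must be compensated by absorbing $C(\xi)\|u^{k+1}-u^k\|_V^2$ into the $L^2$ contribution $\tfrac{1}{2\alpha_0}\|u^{k+1}-u^k\|^2$ coming from the fractional derivative, which is exactly the CFL-type condition stated. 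A minor subtlety worth checking is that $u^{k+1}\ne u^k$ so that division by $\|u^{k+1}-u^k\|^2$ is legitimate; if $u^{k+1}=u^k$ the corresponding summand simply vanishes and the inequality is trivial.
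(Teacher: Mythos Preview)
Your proposal is correct and follows essentially the same route as the paper: the same Taylor expansions about $u^k$, the same choice $v=u^{k+1}-u^k$ summed over $k$, the same application of Lemma~\ref{lem1}, and the same upper bounds $C(\xi_1^k)\|u^{k+1}-u^k\|_V^2$ and $B\|u^{k+1}-u^k\|^2$ on the second-variation remainders. Your extra remark on the case $u^{k+1}=u^k$ is a harmless clarification the paper omits.
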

\begin{proof}
Let $v = u^{k+1}-u^k $. 
As \begin{align*}
    F(u^{k+1}) = F(u^k) + (f(u^{k}) , u^{k+1}-u^k) 
    + \frac{1}{2} \delta^2 F(\xi_1^k) (u^{k+1}-u^k, u^{k+1}-u^k),\\
    G(u^{k+1}) = G(u^k) + (g(u^{k}) , u^{k+1}-u^k) 
    + \frac{1}{2} \delta^2 G(\xi_2^k) (u^{k+1}-u^k, u^{k+1}-u^k),
\end{align*}
where $\xi_i^k = \lambda_i^k u^k + (1-\lambda_i^k) u^{k+1}$ with 
$\lambda_i^k \in [0,1]$ and $i\in\{1,2\}$.
By Lemma ~\ref{lem1} and assumptions on $F$ and $G$, we have
\begin{align*}
    0 &= \frac{1}{\alpha_0} \sum_{k=0}^N \sum_{j=0}^k b_{k-j} ( u^{j+1}-u^j , u^{k+1}-u^k ) + \sum_{k=0}^N ( f(u^k) + g(u^k) , u^{k+1}-u^k ) \\
    &= \frac{1}{\alpha_0} \sum_{k=0}^N \sum_{j=0}^k b_{k-j} ( u^{j+1}-u^j , u^{k+1}-u^k ) + \sum_{k=0}^N \Big[ F( u^{k+1}) - F(u^k) - \frac{1}{2} 
     \delta^2 F(\xi_1^k) (u^{k+1}-u^k,u^{k+1}-u^k) \\
      & \quad\quad\quad\quad\quad\quad\quad\quad\quad\quad\quad
      \quad\quad\quad\quad\quad\quad\quad\quad\quad
      + G(u^{k+1}) - G(u^k) - \frac{1}{2}\delta^2 G(\xi_2^k)
      (u^{k+1}-u^k,u^{k+1}-u^k) \Big]\\
    &\geq \frac{1}{2\alpha_0}\sum_{k=0}^N \| u^{k+1}-u^k \|^2 
    + \sum_{k=0}^N \Big[ F(u^{k+1}) - F(u^k) -\frac{C(\xi_1^k)}{2} \|u^{k+1}-u^k\|^2_V\\
      & \quad\quad\quad\quad\quad\quad\quad\quad\quad\quad\quad\quad\quad
      + G(u^{k+1}) - G(u^k) - \frac{B}{2} \|u^{k+1}-u^k\|^2 \Big]\\
    &= \sum_{k=0}^N \Big[( \frac{1}{2\alpha_0} - \frac{B}{2}) \|u^{k+1}-u^k\|^2       -\frac{C(\xi^k_1)}{2} \|u^{k+1}-u^k\|^2_V \Big] 
       + F(u^{N+1}) + G(u^{N+1}) 
       - F(u^{0}) + G(u^{0}).
\end{align*}
Thus, if $ \alpha_0 \Big(\cfrac{C(\xi) \|u^{k+1}-u^k\|^2_V }{\|u^{k+1}-u^k\|^2} + B \Big) \leq 1$ for any $\xi = (1-\lambda) u^{k+1} + \lambda u^k $ with $0 \leq k\leq N$, we have
\[ F(u^{N+1}) + G(u^{N+1}) \leq  F(u^{0}) + G(u^{0}).  \]
\end{proof}
We remark that when the contrast is large, the time step in explicit scheme must be extremely small.

\subsection{Partially Explicit Scheme}
To find a more efficient method, we propose the partially explicit scheme. We split $V_H$ into a direct sum of two subspace $V_{H,1}$ and $V_{H,2}$, i.e. $V_H= V_{H,1} \oplus V_{H,2} $. Then the finite element solution is $u = u_1 + u_2$ with $u_1 \in V_{H,1}$ and $u_2 \in V_{H,2}$. We will discuss the construction of these two space in Section \ref{SpaceConstructSection}. Consider $\{u^k\}_{k=0}^K \subset V_H$ such that
\begin{align*}
    (\frac{1}{\alpha_0} \sum_{j=0}^k b_{k-j} ( u^{j+1}_1-u^j_1 + u^{j+1}_2 - u^j_2 ) , v_1) 
    + (f(u^{k+1}_1 + u^k_2 ) , v_1 ) + (g(u^{k}_1 + u^k_2 ) , v_1 ) = 0, \quad 
    \forall v_1 \in V_{H,1}, \\
    (\frac{1}{\alpha_0} \sum_{j=0}^k b_{k-j} ( u^{j+1}_1-u^j_1 + u^{j+1}_2 - u^j_2 ) , v_2) 
    + (f(u^{k+1}_1 + u^k_2 ) , v_2 ) + (g(u^{k}_1 + u^k_2 ) , v_2 ) = 0, \quad 
    \forall v_2 \in V_{H,2}.
\end{align*}

As $V_{H,1} \cap V_{H,2} = \emptyset$, by the strengthened Cauchy-Schwarz inequality \cite{aldaz2013strengthened}, we have
\[ \gamma := \sup\limits_{v_1 \in V_{H,1}, v_2 \in V_{H,2} } \cfrac{|(v_1,v_2)|}{\|v_1\|\|v_2\|} <1. \]
\begin{theorem}
Let $\bar{c} = \inf_{u\in V_H } c(u)$, $\bar{C}_{2}=\sup_{\xi \in V_{H}}C_{2}(\xi)$
and 
\[
C_{2}(\xi)=\sup_{v\in V_{H},w\in V_{H,2}}\cfrac{1}{\|v\|_{V}\|w\|_{V}}\delta^{2}F(\xi)(v,w)\leq C(\xi).
\]
If \begin{align}
    \cfrac{\bar{C}_2}{\bar{c}} \sup\limits_{v_2\in V_{H,2} } \cfrac{\|v_2\|_V^2}{\|v_2\|^2} 
    \leq \cfrac{1-\gamma}{\alpha_0}-B(1+\gamma) ,
    \label{eq:stab_cond}
\end{align} 
we have 
\begin{align*}
    F(u^{N+1}) + G(u^{N+1}) 
    \leq  F(u^{0}) + G(u^{0}),
\end{align*}
where $0\leq N \leq K-1$ and $N\in \mathbb{N}$.
\end{theorem}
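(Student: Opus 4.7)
The plan is to mimic the energy-method proofs of the implicit and explicit cases, testing the two equations of the partially explicit scheme with the natural increments $v_{1}=u_{1}^{k+1}-u_{1}^{k}\in V_{H,1}$ and $v_{2}=u_{2}^{k+1}-u_{2}^{k}\in V_{H,2}$, then adding and summing over $k=0,\ldots,N$. The time-fractional part is handled, as in the previous theorems, by Lemma \ref{lem1}, yielding a lower bound of $\frac{1}{2\alpha_{0}}\sum_{k}\|u^{k+1}-u^{k}\|^{2}$; the nonlinear part should telescope to $F(u^{N+1})-F(u^{0})+G(u^{N+1})-G(u^{0})$ plus quadratic remainders to be controlled by the stability condition (\ref{eq:stab_cond}).

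The novelty, compared to the two previous theorems, is that $f$ is evaluated at the intermediate state $u_{1}^{k+1}+u_{2}^{k}$. I would write
\[
(f(u_{1}^{k+1}+u_{2}^{k}),u^{k+1}-u^{k})=\bigl(f(u_{1}^{k+1}+u_{2}^{k}),u_{1}^{k+1}-u_{1}^{k}\bigr)+\bigl(f(u_{1}^{k+1}+u_{2}^{k}),u_{2}^{k+1}-u_{2}^{k}\bigr)
\]
and apply Taylor's theorem with integral remainder separately to each. The first piece is an implicit step in $V_{H,1}$, so $F(u_{1}^{k+1}+u_{2}^{k})-F(u^{k})$ carries a positive quadratic $\tfrac{1}{2}\delta^{2}F(\zeta^{k})(u_{1}^{k+1}-u_{1}^{k},u_{1}^{k+1}-u_{1}^{k})$. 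The second is an explicit step in $V_{H,2}$, giving $F(u^{k+1})-F(u_{1}^{k+1}+u_{2}^{k})$ minus a quadratic $\tfrac{1}{2}\delta^{2}F(\eta^{k})(u_{2}^{k+1}-u_{2}^{k},u_{2}^{k+1}-u_{2}^{k})$. Telescoping recovers $F(u^{N+1})-F(u^{0})$ together with these two quadratic sums. The $g$-term is handled exactly as in the explicit-scheme proof, producing $G(u^{N+1})-G(u^{0})$ minus a $\delta^{2}G$ quadratic in the full increment $u^{k+1}-u^{k}$. Applying the assumed bounds $\delta^{2}F\geq\bar{c}\|\cdot\|_{V}^{2}$ on the implicit piece, the sharper $\delta^{2}F(\eta^{k})(v_{2},v_{2})\leq\bar{C}_{2}\|v_{2}\|_{V}^{2}$ on the explicit piece (this is precisely why the restricted sup $C_{2}(\xi)$ over $w\in V_{H,2}$ is introduced), and $|\delta^{2}G|\leq B\|\cdot\|^{2}$, the problem reduces to showing that on each step the combination
\[
\tfrac{1}{\alpha_{0}}\|u^{k+1}-u^{k}\|^{2}-B\|u^{k+1}-u^{k}\|^{2}+\bar{c}\|u_{1}^{k+1}-u_{1}^{k}\|_{V}^{2}-\bar{C}_{2}\|u_{2}^{k+1}-u_{2}^{k}\|_{V}^{2}
\]
is nonnegative.

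The decisive step is to invoke the strengthened Cauchy--Schwarz inequality in \emph{both} directions: use $\|u^{k+1}-u^{k}\|^{2}\geq(1-\gamma)(\|u_{1}^{k+1}-u_{1}^{k}\|^{2}+\|u_{2}^{k+1}-u_{2}^{k}\|^{2})$ to bound the positive $\tfrac{1}{\alpha_{0}}$-term from below, and $\|u^{k+1}-u^{k}\|^{2}\leq(1+\gamma)(\|u_{1}^{k+1}-u_{1}^{k}\|^{2}+\|u_{2}^{k+1}-u_{2}^{k}\|^{2})$ to bound the negative $-B$-term from below. This is exactly what produces the asymmetric factors $(1-\gamma)/\alpha_{0}-B(1+\gamma)$ that appear on the right-hand side of (\ref{eq:stab_cond}). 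The $V$-norm penalty $-\bar{C}_{2}\|u_{2}^{k+1}-u_{2}^{k}\|_{V}^{2}$ is then converted into an $L^{2}$-penalty via $\sup_{v_{2}\in V_{H,2}}\|v_{2}\|_{V}^{2}/\|v_{2}\|^{2}$, so that (\ref{eq:stab_cond}) absorbs it against the net positive coefficient just produced, with the implicit $V$-reserve $\bar{c}\|u_{1}^{k+1}-u_{1}^{k}\|_{V}^{2}$ supplying the matching scale factor that yields the $\bar{C}_{2}/\bar{c}$ ratio. The main obstacle I expect is precisely this bookkeeping step: tracking the three distinct intermediate points $\zeta^{k},\eta^{k},\xi^{k}$, orienting each application of the strengthened Cauchy--Schwarz inequality so the $(1-\gamma)$ and $(1+\gamma)$ factors land in the correct positions, and using the $\bar{c}$-reserve to dominate leftover cross-contributions from $V_{H,1}$ rather than discarding it. Once the per-step combination is nonnegative, summing in $k$ and collapsing the telescoping part gives $F(u^{N+1})+G(u^{N+1})\leq F(u^{0})+G(u^{0})$.
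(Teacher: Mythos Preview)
Your overall plan---test with $v_i=u_i^{k+1}-u_i^k$, add the two equations, sum in $k$, invoke Lemma~\ref{lem1}, telescope $F$ and $G$, and use the strengthened Cauchy--Schwarz inequality in both directions to produce the factors $(1-\gamma)/\alpha_0$ and $B(1+\gamma)$---matches the paper exactly. The gap is in your treatment of the $f$-term. Your two-step Taylor expansion (an implicit step $u^k\to u_1^{k+1}+u_2^k$ followed by an explicit step $u_1^{k+1}+u_2^k\to u^{k+1}$) yields a positive reserve $+\tfrac{\bar c}{2}\|u_1^{k+1}-u_1^k\|_V^2$ on the $V_{H,1}$ increment and a penalty $-\tfrac{\bar C_2}{2}\|u_2^{k+1}-u_2^k\|_V^2$ on the $V_{H,2}$ increment. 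These sit on \emph{different} pieces of the splitting, and there is no mechanism to trade one against the other: the $\bar c$-reserve on $v_1$ cannot ``supply the matching scale factor'' for a penalty living on $v_2$. If you carry your argument through honestly you obtain the condition $\bar C_2\sup_{v_2}\|v_2\|_V^2/\|v_2\|^2\le(1-\gamma)/\alpha_0-B(1+\gamma)$ with no $1/\bar c$; when $\bar c>1$ this is strictly stronger than (\ref{eq:stab_cond}), so the theorem as stated is not proved.

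The paper's device is to add and subtract $f(u^{k+1})$ rather than Taylor in two separate directions:
\[
\bigl(f(u_1^{k+1}+u_2^k),\,u^{k+1}-u^k\bigr)=\bigl(f(u^{k+1}),\,u^{k+1}-u^k\bigr)-\bigl(f(u^{k+1})-f(u_1^{k+1}+u_2^k),\,u^{k+1}-u^k\bigr).
\]
Taylor the first term at $u^{k+1}$ to get $F(u^{k+1})-F(u^k)+\tfrac12\delta^2F(\xi)(u^{k+1}-u^k,u^{k+1}-u^k)\ge F(u^{k+1})-F(u^k)+\tfrac{\bar c}{2}\|u^{k+1}-u^k\|_V^2$, a reserve on the \emph{full} increment. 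The second term, by the mean-value theorem and since $u^{k+1}-(u_1^{k+1}+u_2^k)=u_2^{k+1}-u_2^k$, equals $\delta^2F(\tilde\xi)(u^{k+1}-u^k,\,u_2^{k+1}-u_2^k)\le\bar C_2\,\|u^{k+1}-u^k\|_V\,\|u_2^{k+1}-u_2^k\|_V$; this is precisely where the asymmetric definition of $C_2$ (one argument in $V_H$, the other in $V_{H,2}$) is used. Young's inequality then splits this product into $\tfrac{\bar c}{2}\|u^{k+1}-u^k\|_V^2$, which the reserve cancels exactly, plus a $V_{H,2}$ penalty carrying the ratio $\bar C_2/\bar c$ that is absorbed by (\ref{eq:stab_cond}).
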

\begin{proof}
Let $v_1 = u_1^{k+1}-u_1^k$ and $v_2 = u_2^{k+1}-u_2^k$. We obtain
\begin{align*}
    (\frac{1}{\alpha_0} \sum_{j=0}^k b_{k-j} ( u^{j+1}_1-u^j_1 + u^{j+1}_2 - u^j_2 ) , u_1^{k+1}-u_1^k) 
    + (f(u^{k+1}_1 + u^k_2 ) , u_1^{k+1}-u_1^k ) + (g(u^{k}_1 + u^k_2 ) , u_1^{k+1}-u_1^k ) = 0, \\
    (\frac{1}{\alpha_0} \sum_{j=0}^k b_{k-j} ( u^{j+1}_1-u^j_1 + u^{j+1}_2 - u^j_2 ) , u_2^{k+1}-u_2^k) 
    + (f(u^{k+1}_1 + u^k_2 ) , u_2^{k+1}-u_2^k ) + (g(u^{k}_1 + u^k_2 ) , u_2^{k+1}-u_2^k ) = 0.
\end{align*}

Summing up these two equations, we get 
\begin{align*}
    (\frac{1}{\alpha_0} \sum_{j=0}^k b_{k-j} ( u^{j+1}-u^j) , u^{k+1}-u^k) 
    + (f(u^{k+1}_1 + u^k_2 ) , u^{k+1}-u^k ) 
    + (g(u^{k}_1 + u^k_2 ) , u^{k+1}-u^k ) = 0 .
\end{align*}
By Lemma \ref{lem1}, we have
\begingroup
\allowdisplaybreaks
\begin{align*}
    &(\frac{1}{\alpha_0} \sum_{k=0}^N \sum_{j=0}^k b_{k-j} ( u^{j+1}-u^j) , u^{k+1}-u^k)\\
    \geq & \frac{1}{2\alpha_0} \sum_{k=0}^N \|u^{k+1}-u^k\|^2\\
    =& \frac{1}{2\alpha_0} \sum_{k=0}^N \sum_{i=1}^2 \|u_i^{k+1}-u_i^k\|^2 + \frac{1}{2\alpha_0} \sum_{k=0}^N 2(u_1^{k+1}-u_1^k, u_2^{k+1}-u_2^k) \\
    \geq&  \frac{1}{2\alpha_0} \sum_{k=0}^N \sum_{i=1}^2 \|u_i^{k+1}-u_i^k\|^2 - \frac{1}{2\alpha_0} \sum_{k=0}^N 2\gamma \|u_1^{k+1}-u_1^k\| \| u_2^{k+1}-u_2^k\| \\
    \geq &  \frac{1}{2\alpha_0} \sum_{k=0}^N \sum_{i=1}^2 \|u_i^{k+1}-u_i^k\|^2 - \frac{\gamma}{2\alpha_0} \sum_{k=0}^N  \sum_{i=1}^2\|u_i^{k+1}-u_i^k\|^2 \\
    = &\frac{1-\gamma}{2\alpha_0} \sum_{k=0}^N \sum_{i=1}^2 \|u_i^{k+1}-u_i^k\|^2.
\end{align*}
\endgroup

There exists $ \xi_i^k = \lambda_i^k u^k + (1-\lambda_i^k) u^{k+1}$, $\lambda_i^k \in [0,1]$ and $i\in \{1,2\}$ such that
\begin{align*}
    (f(u^{k+1}), u^{k+1}-u^k) =  F(u^{k+1})-F(u^k) 
    + \frac{1}{2} \delta^2 F(\xi_1^k) (u^{k+1}-u^k, u^{k+1}-u^k), \\
    (g(u^k) , u^{k+1}-u^k) =  G(u^{k+1})-G(u^k) 
    - \frac{1}{2} \delta^2 G(\xi_2^k) (u^{k+1}-u^k, u^{k+1}-u^k).
\end{align*}

Then,
\begingroup
\allowdisplaybreaks
\begin{align*}
    0 =& (\frac{1}{\alpha_0} \sum_{k=0}^N \sum_{j=0}^k b_{k-j} ( u^{j+1}-u^j) , u^{k+1}-u^k)  
    + \sum_{k=0}^N (f(u_1^{k+1} +u_2^k), u^{k+1}-u^k) 
    + \sum_{k=0}^N (g(u_1^{k} +u_2^k), u^{k+1}-u^k)\\
    \geq& \frac{1-\gamma}{2\alpha_0} \sum_{k=0}^N \sum_{i=1}^2\|u_i^{k+1}-u_i^k\|^2
    + \sum_{k=0}^N (f(u_1^{k+1}+u_2^k) - f(u^{k+1}) , u^{k+1}-u^k) \\
    &\quad+ \sum_{k=0}^N (f(u^{k+1}) , u^{k+1}-u^k) 
    + \sum_{k=0}^N (g(u^k) , u^{k+1}-u^k)\\
    =& \frac{1-\gamma}{2\alpha_0} \sum_{k=0}^N \sum_{i=1}^2\|u_i^{k+1}-u_i^k\|^2 
    + \sum_{k=0}^N (f(u_1^{k+1}+u_2^k) - f(u^{k+1}) , u^{k+1}-u^k)\\
    &\quad+ \sum_{k=0}^N ( F(u^{k+1})-F(u^k) 
    + \frac{1}{2} \delta^2 F(\xi_1^k) (u^{k+1}-u^k, u^{k+1}-u^k)\\
    &\quad+ \sum_{k=0}^N ( G(u^{k+1})-G(u^k) 
    - \frac{1}{2} \delta^2 G(\xi_2^k) (u^{k+1}-u^k, u^{k+1}-u^k)\\
    \geq& \frac{1-\gamma}{2\alpha_0} \sum_{k=0}^N \sum_{i=1}^2\|u_i^{k+1}-u_i^k\|^2
    + \sum_{k=0}^N (f(u_1^{k+1}+u_2^k) - f(u^{k+1}) , u^{k+1}-u^k) \\
    &\quad+ (F(u^{N+1}) + G(u^{N+1})) 
    - ( F(u^{0}) + G(u^{0}) ) 
    + \sum_{k=0}^N \frac{c(\xi_1^k)}{2} \|u^{k+1}-u^k\|_V^2 
    - \sum_{k=0}^N  \frac{B}{2} \|u^{k+1}-u^k\|^2 \\
    \geq& (\frac{1-\gamma}{2\alpha_0}-\frac{B}{2}(1+\gamma)) \sum_{k=0}^N \sum_{i=1}^2\|u_i^{k+1}-u_i^k\|^2
    + \sum_{k=0}^N (f(u_1^{k+1}+u_2^k) - f(u^{k+1}) , u^{k+1}-u^k)\\
    &\quad + (F(u^{N+1}) + G(u^{N+1})) - ( F(u^{0}) + G(u^{0}) ) 
    + \sum_{k=0}^N \frac{\bar{c}}{2} \|u^{k+1}-u^k\|_V^2 .
\end{align*}
\endgroup

It suffices to show 
\begin{align*}
    \sum_{k=0}^N ( f(u^{k+1})-f(u_1^{k+1}+u_2^k), u^{k+1}-u^k) 
    \leq (\frac{1-\gamma}{2\alpha_0}-\frac{B}{2}(1+\gamma)) \sum_{k=0}^N \sum_{i=1}^2\|u_i^{k+1}-u_i^k\|^2
    + \sum_{k=0}^N \frac{\bar{c}}{2} \|u^{k+1}-u^k\|_V^2 .
\end{align*}

There exists $\tilde{\xi}^k = \tilde{\lambda}^k (u_1^{k+1} + u_2^k) + (1-\tilde{\lambda}^{k+1}) u^{k+1}$ with $\tilde{\lambda}^k \in [0,1]$ such that
\begin{align*}
    \sum_{k=0}^N ( f(u^{k+1})-f(u_1^{k+1}+u_2^k), u^{k+1}-u^k) 
    &= \sum_{k=0}^N (\delta^2 F(\tilde{\xi}^k)(u^{k+1}-u^k) , u^{k+1}_2 - u_2^k)\\
    &\leq \sum_{k=0}^N C_2 (\tilde{\xi}^k) 
    \|u^{k+1}-u^k\|_V \|u^{k+1}_2 - u_2^k\|_V\\
    &\leq \sum_{k=0}^N \Bar{C_2}  
    \|u^{k+1}-u^k\|_V \|u^{k+1}_2 - u_2^k\|_V\\
    &\leq \sum_{k=0}^N (\frac{\bar{c}}{2}\|u^{k+1}-u^k\|_V^2 
    + \frac{\Bar{C_2}}{2\bar{c}} \|u^{k+1}_2 - u_2^k\|_V^2).
\end{align*}
Thus, we need 
\begin{align*}
    \sum_{k=0}^N (\frac{\bar{c}}{2}\|u^{k+1}-u^k\|_V^2 
    + \frac{\Bar{C_2}}{2\bar{c}} \|u^{k+1}_2 - u_2^k\|_V^2 )
    \leq (\frac{1-\gamma}{2\alpha_0}-\frac{B}{2}(1+\gamma)) \sum_{k=0}^N \sum_{i=1}^2\|u_i^{k+1}-u_i^k\|^2
    + \sum_{k=0}^N \frac{\bar{c}}{2} \|u^{k+1}-u^k\|_V^2,
\end{align*}
which is equivalent to 
\begin{align}
    \sum_{k=0}^N \frac{\Bar{C_2}}{2\bar{c}} \|u^{k+1}_2 - u_2^k\|_V^2 
    \leq (\frac{1-\gamma}{2\alpha_0}-\frac{B}{2}(1+\gamma)) \sum_{k=0}^N \|u_1^{k+1}-u_1^k\|^2 
    + (\frac{1-\gamma}{2\alpha_0}-\frac{B}{2}(1+\gamma)) \sum_{k=0}^N \|u_2^{k+1}-u_2^k\|^2.
    \label{condition1}
\end{align}
Thus, if $\cfrac{\bar{C}_2}{\bar{c}} \sup_{v_2\in V_{H,2} } \cfrac{\|v_2\|_V^2}{\|v_2\|^2} 
\leq \cfrac{1-\gamma}{\alpha_0}-B(1+\gamma) $, 
then (\ref{condition1}) is satisfied and 
we have 
\begin{align*}
    F(u^{N+1}) + G(u^{N+1}) 
    \leq  F(u^{0}) + G(u^{0}).
\end{align*}
\end{proof}

We remark that if the spaces $V_{H,1}$ and $V_{H,2}$ are constructed appropriately, we have 
\[ \sup_{v_2\in V_{H,2}} \cfrac{ \|\kappa^{\frac{1}{2}} \nabla v_2 \|^2 } { \|v_2\|^2 } = D H^{-2}, \]
where $D$ is a constant and independent on the contrast.
Thus, the time step of our partially explicit scheme scales as the coarse mesh size and does not depend on the contrast \cite{chung2021contrastindependent}.

\section{Numerical Results}
In this section, we first discuss the construction of $V_{H,1}$ and $V_{H,2}$. Then we present numerical results for various $f(u)$ and $g(u)$. We discuss two cases of diffusion operator $f(u)$, the first one is linear
\[ f(u) = -\nabla \cdot ( \kappa \nabla u ) \]
while the second one is nonlinear 
\[ f(u) = -\nabla \cdot ( \kappa (1+u^2) \nabla u). \]
For the reaction term $g(u)$, we use
\begin{align*}
    g(u) = -(10u(u^2-1) + g_0) \quad \text{and} \quad 
    g(u) = -(-\cfrac{10u}{u+2} + g_0), 
\end{align*}
where $g_0$ is the source term. One of the source terms is singular while the other one is smooth.
In all cases, we use a permeability field $\kappa$ which contains many high contrast strikes. We let $\alpha = 0.8$ in all numerical experiments. We take the final time $T=0.05$ and the domain $\Omega = [0,1]^2 \subset \mathbb{R}^2$. In all examples,  the coarse mesh size is $\frac{1}{10}$ and the fine mesh size is $\frac{1}{100}$. The reference solution is obtained using the implicit scheme with fine grid basis functions. We will compare the solution and error for the following three schemes.
\begin{itemize}
    \item the implicit scheme with CEM basis.
    \item the implicit scheme with CEM basis and additional degrees of freedom from $V_{H,2}$. 
    \item the partially explicit scheme with CEM basis and additional degrees of freedom from $V_{H,2}$.
\end{itemize}
In all cases, we use Picard or Newton iterations to solve the nonlinear equations, details of which can be found in \cite{chung2021contrastindependent}. From the numerical results, we find that the partially explicit scheme we proposed obtains similar accuracy as the implicit CEM scheme with additional degrees of freedom.  

\subsection{Construction of $V_{H,1}$ and $V_{H,2}$}
\label{SpaceConstructSection}

In this section, we present a method to build
the two spaces satisfying (\ref{eq:stab_cond}). 
Here, we follow our previous work \cite{chung_partial_expliict21}.
We will first present the CEM finite element space which is used as $V_{H,1}$.
Next we will construct $V_{H,2}$ using eigenvalue problems.
In the following, we let $V(S) = H_0^1(S)$ for a proper 
subset $S\subset \Omega$.

\subsubsection{CEM method}
\label{sec:cem}

In this section, we present the CEM finite element space.
The finite element space is constructed by solving
a constrained energy minimization problem. Let $\mathcal{T}_{H}$
be a coarse grid partition of $\Omega$. For $K_{i}\in\mathcal{T}_{H}$,
we first have to build a collection of auxiliary basis in $V(K_{i})$.
Let $\{\chi_i\}$ be a partition of unity functions corresponding to an overlapping partition of the domain.
We solve the following eigenvalue problem:
\begin{align*} \int_{K_i} \kappa \nabla \psi_j^{(i)} \cdot \nabla v = \lambda_j^{(i)} s_i ( \psi_j^{(i)},v), \quad
\forall v \in V(K_i), \end{align*}
where \begin{align*} s_i(u,v) = \int_{K_i} \tilde{\kappa} u v, \quad
\tilde{\kappa} = \kappa H^{-2} \; \text{or} \; 
\tilde{\kappa} = \kappa \sum_{i}\left|\nabla \chi_{i}\right|^{2}.   \end{align*}  
We then rearrange and gather the $L_i$ eigenfunctions corresponding to the first $L_i$ smallest eigenvalues. 
We define \[ V_{aux}^{(i)}:=\text{span}\{\psi_{j}^{(i)}:\;1\leq j\leq L_{i}\}.  \]
We need to construct a  projection operator $\Pi:L^{2}(\Omega)\mapsto V_{aux}\subset L^{2}(\Omega)$
\[
s(\Pi u,v)=s(u,v),\quad\forall v\in V_{aux}:=\sum_{i=1}^{N_{e}}V_{aux}^{(i)},
\]
with $s(u,v):=\sum_{i=1}^{N_{e}}s_{i}(u|_{K_{i}},v|_{K_{i}})$ and $N_e$ being the number of coarse elements.
We define $K_{i}^{+}$ to be an oversampling domain of $K_{i}$, which is a few coarse blocks larger than $K_i$ \cite{chung2018constraint}. 
For every auxiliary basis $\psi_{j}^{(i)}$, we search for
a local basis function $\phi_{j}^{(i)}\in V(K_{i}^{+})$
such that 
\begin{align*}
a(\phi_{j}^{(i)},v)+s(\mu_{j}^{(i)},v) & =0,\quad\forall v\in V(K_{i}^{+}),\\
s(\phi_{j}^{(i)},\nu) & =s(\psi_{j}^{(i)},\nu),\quad\forall\nu\in V_{aux}(K_{i}^{+}),
\end{align*}
for some $\mu_{j}^{(i)} \in V_{aux}$.
Then the CEM finite element space is 
\begin{align*}
V_{cem} & :=\text{span}\{\phi_{j}^{(i)}:\;1\leq i\leq N_{e},1\leq j\leq L_{i}\}.
\end{align*}
The CEM solution $u_{cem}$ is given by
\begin{align*}
(\cfrac{ \partial u_{cem} }{\partial t} ,v ) + ( f(u_{cem}),v) + (g(u_{cem}) , v ) = 0, \quad \forall v\in V_{cem}.
\end{align*}
Let $\tilde{V}:= \{v\in V: \; \Pi(v) = 0 \}$ and we will construct $V_{H,2}$ in the next section.

\subsubsection{Construction of $V_{H,2}$}





We construct the second space $V_{H,2}$ based on the CEM type finite
element space. For
each coarse element $K_{i}$, we obtain the second type of auxiliary
basis by solving an eigenvalue problem. 
For each $K_i$, after solving
\begin{align}
\label{eq:spectralCEM2}
\int_{K_{i}}\kappa\nabla\xi_{j}^{(i)}\cdot\nabla v & =\gamma_{j}^{(i)}\int_{K_{i}}\xi_{j}^{(i)}v, \;\ \forall v\in V(K_{i})\cap\tilde{V},
\end{align}
($(\xi_{j}^{(i)},\gamma_{j}^{(i)})\in(V(K_{i})\cap\tilde{V})\times\mathbb{R}$), we rearrange and select the first $J_i$ eigenfunctions corresponding to the smallest $J_i$ eigenvalues. 
We define the second auxilliary space $V_{aux,2} := \text{span}\{\xi_j^{(i)} : 1\leq i \leq N_e, 1\leq j\leq J_i \}$.
For each auxiliary basis $\xi_j^{(i)} \in V_{aux,2}$, we define a basis $\zeta_{j}^{(i)} \in V(K_i^+)$ such
that for some $\mu_{j}^{(i),1} \in V_{aux}$, $ \mu_{j}^{(i),2} \in V_{aux,2}$, we have 
\begin{align}
a(\zeta_{j}^{(i)},v)+s(\mu_{j}^{(i),1},v)+ ( \mu_{j}^{(i),2},v) & =0, \quad\forall v\in V(K_i^+), \label{eq:v2a} \\
s(\zeta_{j}^{(i)},\nu) & =0, \quad\forall\nu\in V_{aux}, \label{eq:v2b} \\
(\zeta_{j}^{(i)},\nu) & =( \xi_{j}^{(i)},\nu), \quad\forall\nu\in V_{aux,2}. \label{eq:v2c}
\end{align}
We define $$V_{H,2}=\text{span}\{\zeta_{j}^{(i)}| \; 1\leq i \leq N_e, \;  1 \leq j\leq J_i\}.$$

\subsection{Linear Diffusion Operator $f(u)$}
In this section, we use a linear diffusion operator 
\[ f(u) = - \nabla \cdot (\kappa \nabla u).  \]
After time discretization, Equation (\ref{eq0}) becomes  
\begin{align*}
    \frac{1}{\alpha_0} \sum_{j=0}^k b_{k-j} (u^{j+1}-u^j)  
    - \nabla \cdot (\kappa \nabla u) + g(u) = 0.
\end{align*}
In our first case, the reaction term $g(u) = -(10u(u^2-1) + g_0)$, where $g_0$ is a singular source term.
We set the time step $\Delta t = \frac{T}{4000}$.
In Figure \ref{NRfig1}, we present the permeability field $\kappa$ and the singular source term $g_0$. Figure \ref{NRfig2} shows the implicit fine grid solution (reference solution), implicit CEM solution with additional basis and the partially explicit solution at the final time. The relative $L^2$ error plot and relative energy error plot is presented in Figure \ref{NRfig3}. In all error plots, the blue, red and black curves represent the implicit CEM scheme, the implicit CEM scheme with additional basis and the partially explicit scheme, respectively. From these two error plots in Figure \ref{NRfig3}, we find that there is a considerable improvement in accuracy when we use additional basis. We also find that the error curves for our proposed partially explicit scheme coincide with the error curves for implicit CEM scheme with additional basis, which implies that these two schemes can obtain similar accuracy.
\begin{figure}[H]
\centering
\subfigure{
\includegraphics[width = 6cm]{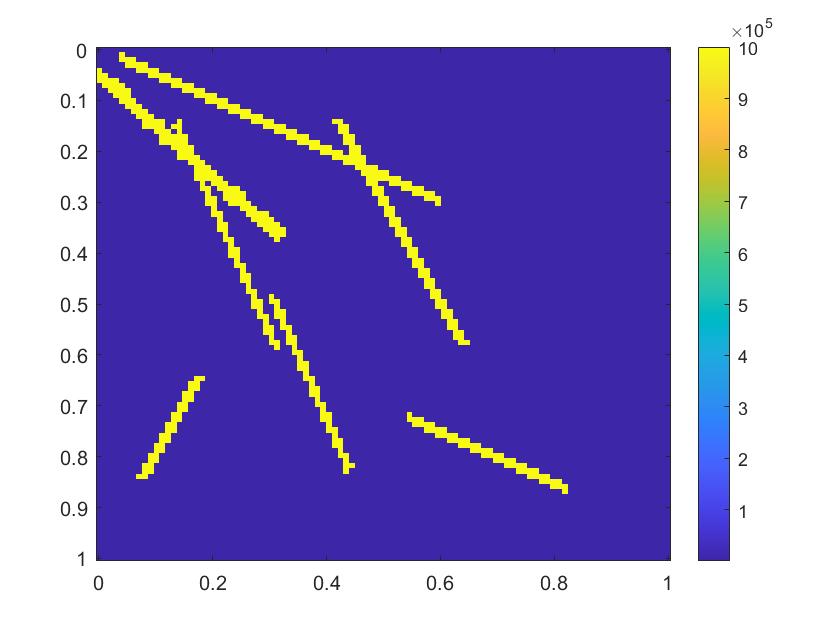}
}
\subfigure{
\includegraphics[width = 6cm]{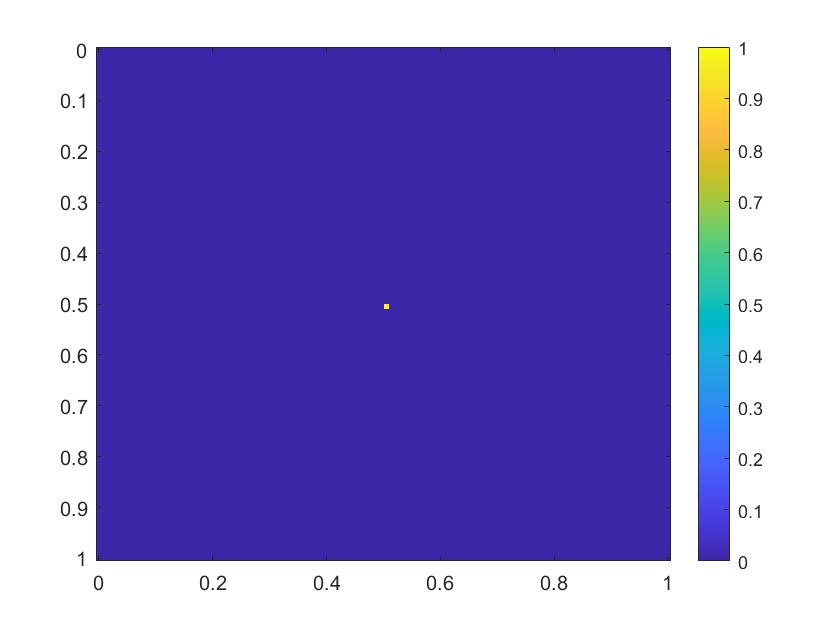}
}
\caption{Left: $\kappa$. Right: $g_0$.}
\label{NRfig1}
\end{figure}
\begin{figure}[H]
\centering
\subfigure{
\includegraphics[width = 5cm]{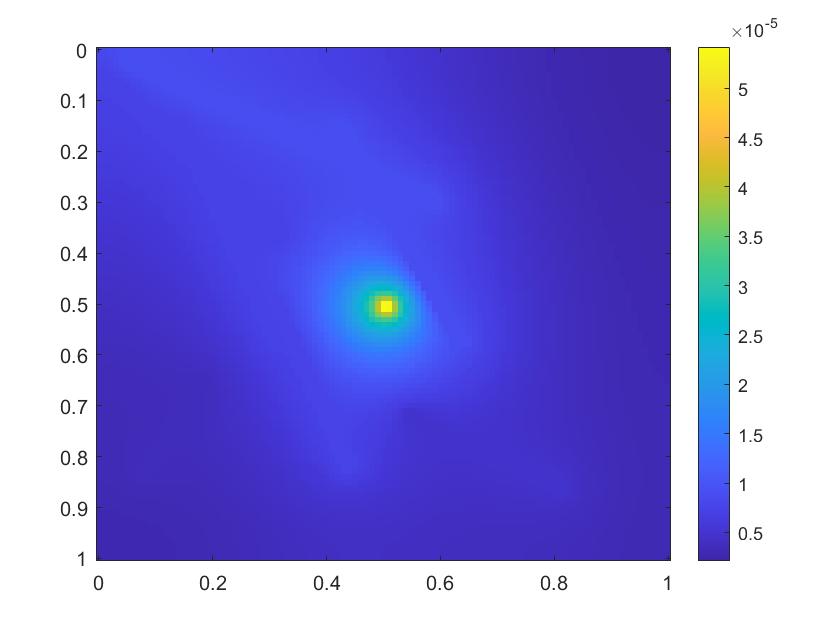}
}
\subfigure{
\includegraphics[width = 5cm]{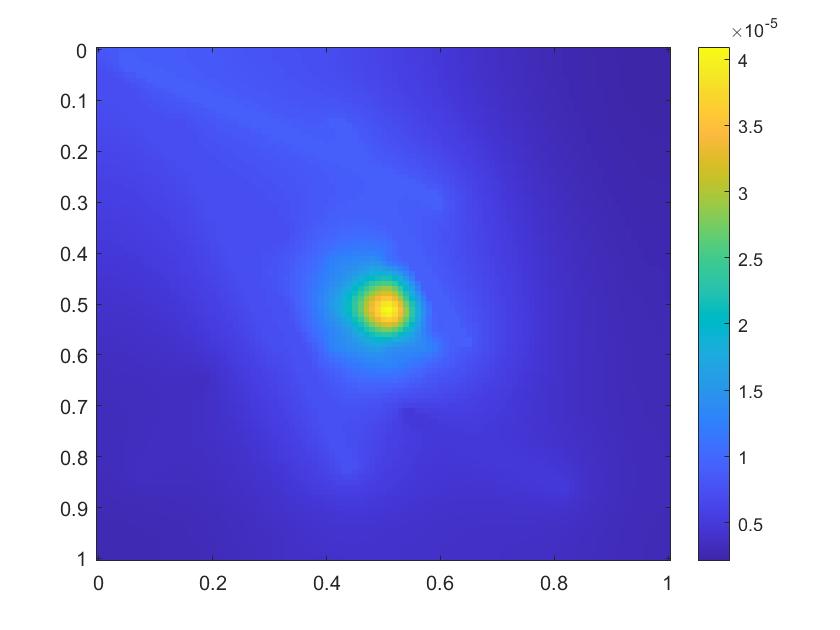}
}
\subfigure{
\includegraphics[width = 5cm]{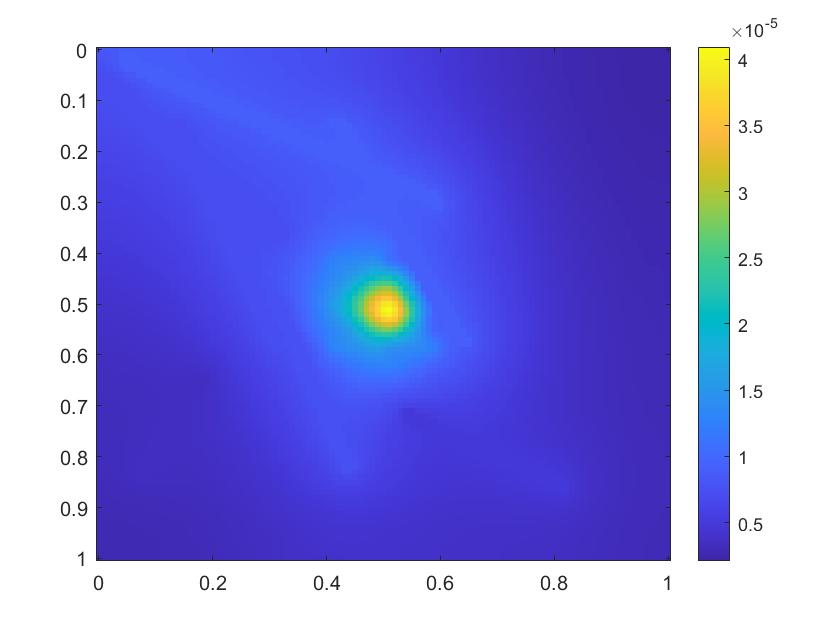}
}
\caption{Left: Implicit fine grid solution at $t=T$. 
Middle: Implicit CEM solution with additional basis at $t=T$. 
Right: Partially explicit solution at $t=T$.}
\label{NRfig2}
\end{figure}
\begin{figure}[H]
\centering
\subfigure{
\includegraphics[width = 6cm]{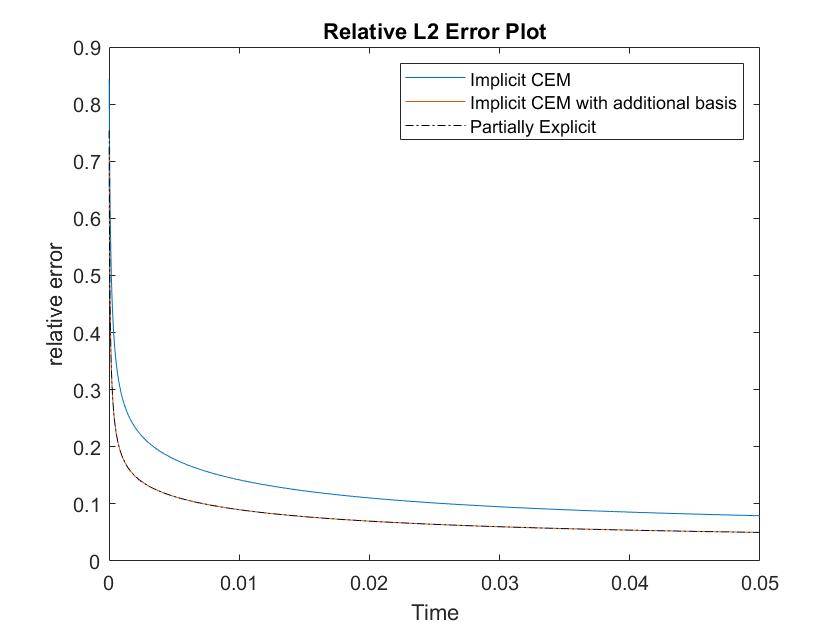}
}
\subfigure{
\includegraphics[width = 6cm]{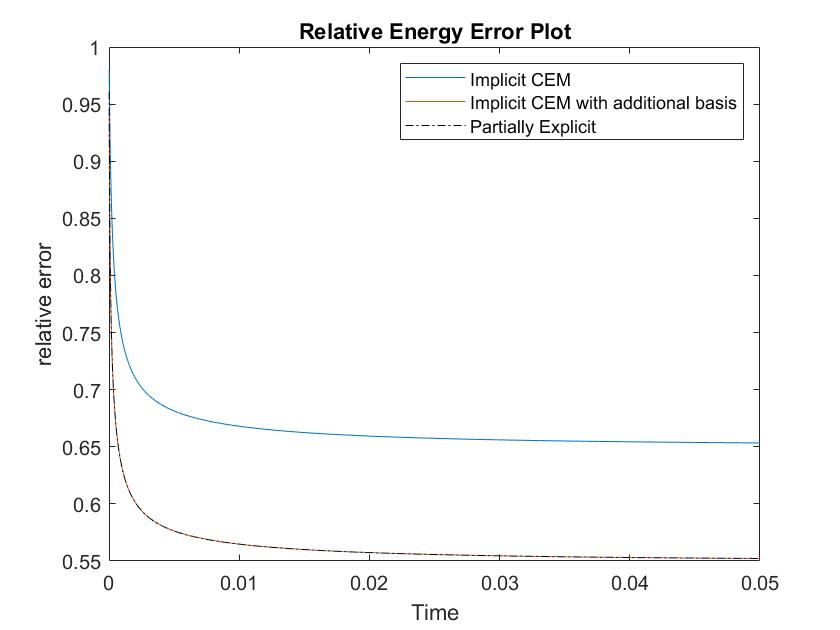}
}
\caption{Left: Relative $L^2$ error. 
Right: Relative energy error.}
\label{NRfig3}
\end{figure}

In this case, we consider $g(u) = -(10u(u^2-1) + g_0)$ and we use a smooth source term $g_0$. 
We set $\Delta t = \frac{T}{4000}$. The permeability field $\kappa$ and the source term $g_0$ is presented in Figure \ref{NRfig4}. The reference solution, implicit CEM solution with additional basis and the partially explicit solution at $t=T$ are shown in Figure \ref{NRfig5}. The relative $L^2$ and energy error plots are presented in Figure \ref{NRfig6}. In this case, the errors for three schemes are comparable. The improvement from the additional basis is not as obvious as our first case. We can find that the partially explicit scheme can achieve similar accuracy as the implicit CEM scheme with additional basis. 
\begin{figure}[H]
\centering
\subfigure{
\includegraphics[width = 6cm]{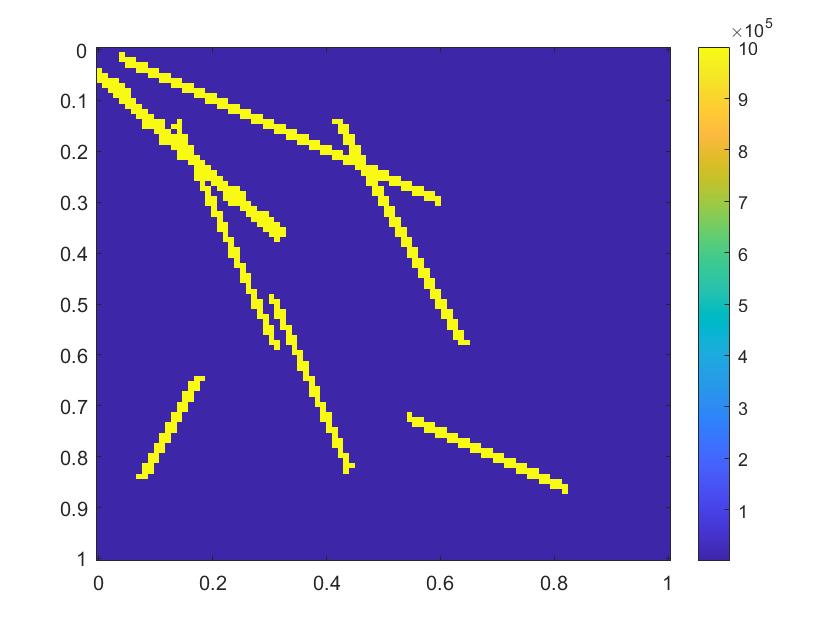}
}
\subfigure{
\includegraphics[width = 6cm]{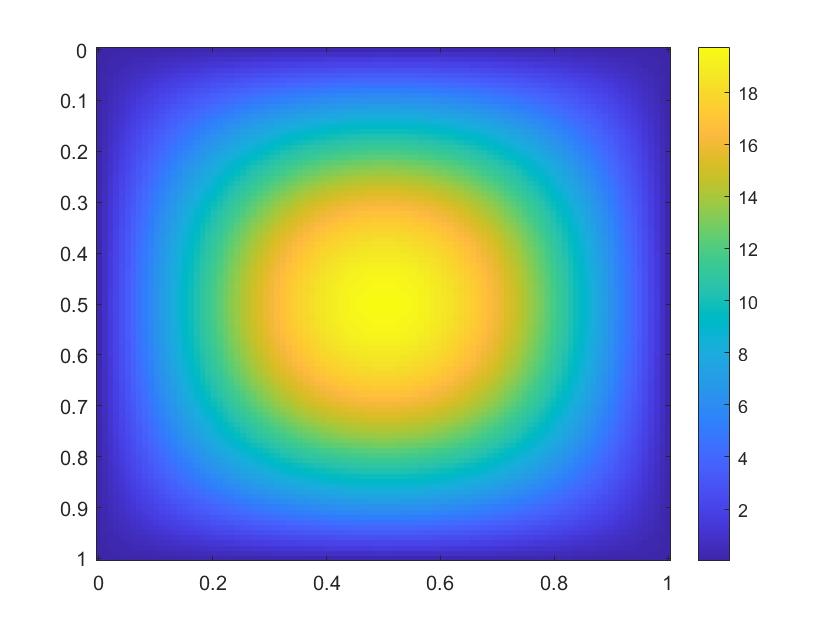}
}
\caption{Left: $\kappa$. Right: $g_0$.}
\label{NRfig4}
\end{figure}

\begin{figure}[H]
\centering
\subfigure{
\includegraphics[width = 5cm]{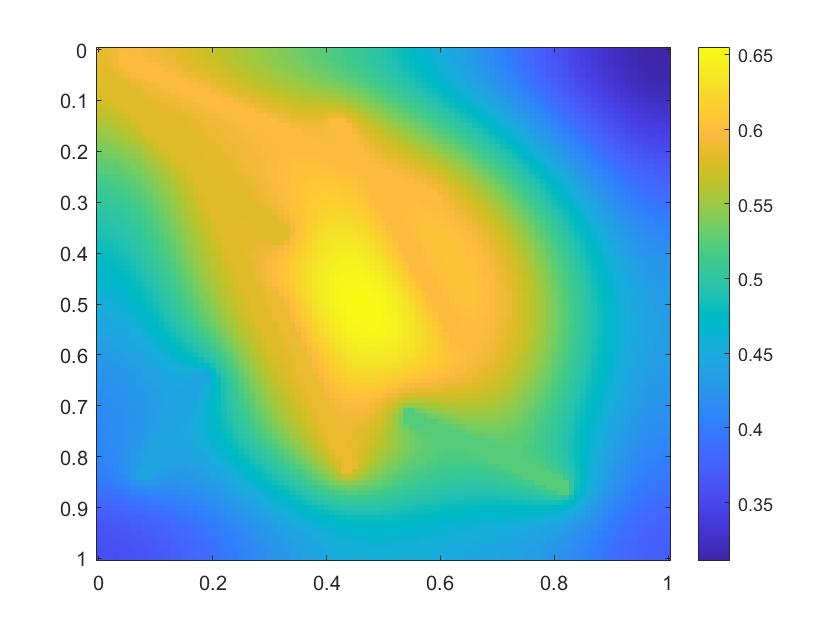}
}
\subfigure{
\includegraphics[width = 5cm]{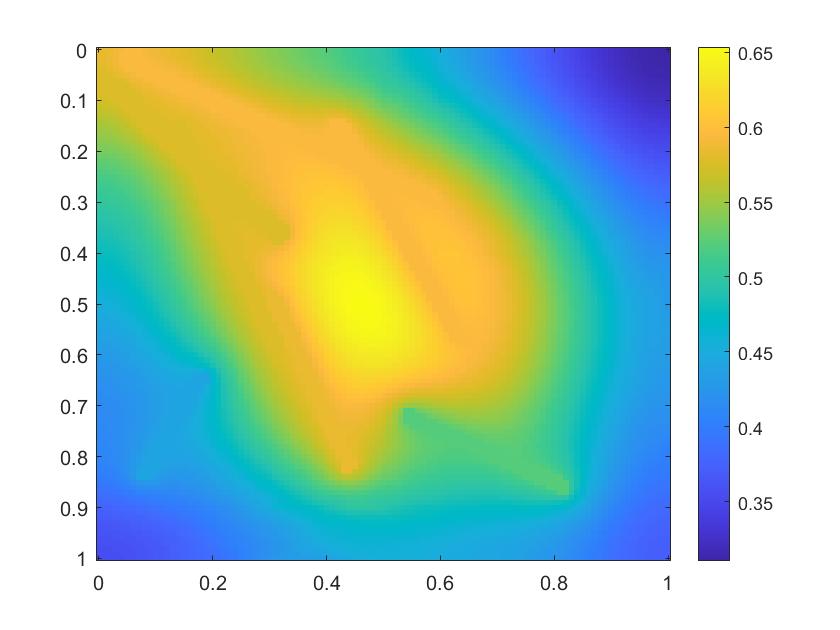}
}
\subfigure{
\includegraphics[width = 5cm]{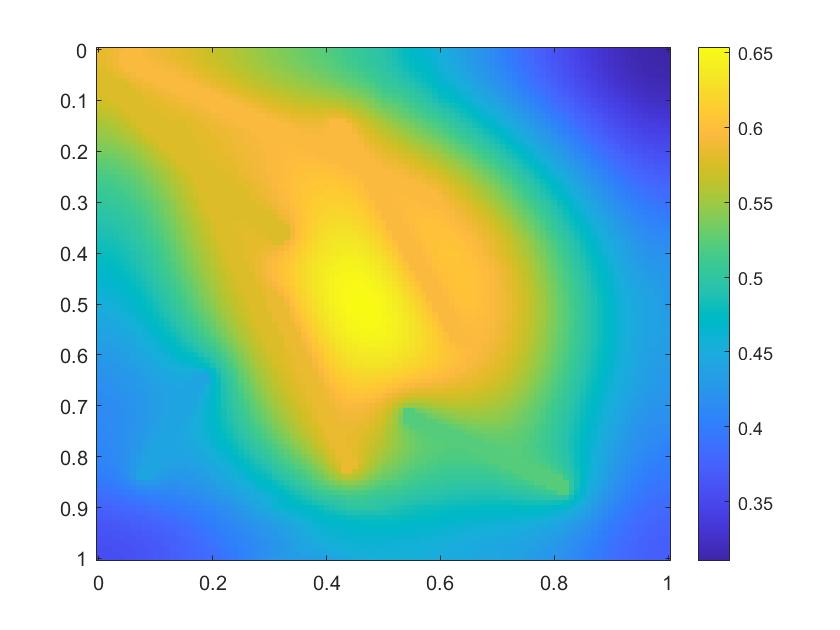}
}
\caption{Left: Implicit fine grid solution at $t=T$. 
Middle: Implicit CEM solution with additional basis at $t=T$. 
Right: Partially explicit solution at $t=T$.}
\label{NRfig5}
\end{figure}

\begin{figure}[H]
\centering
\subfigure{
\includegraphics[width = 6cm]{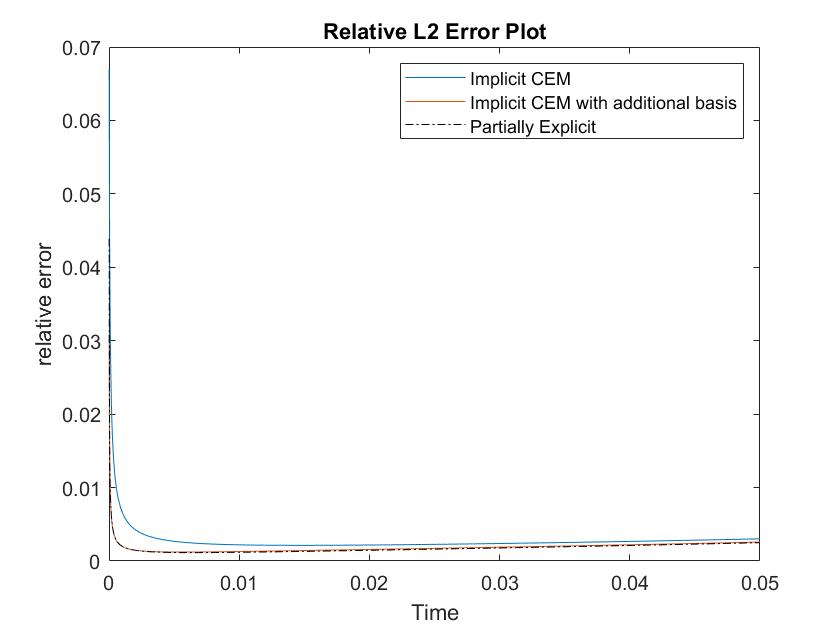}
}
\subfigure{
\includegraphics[width = 6cm]{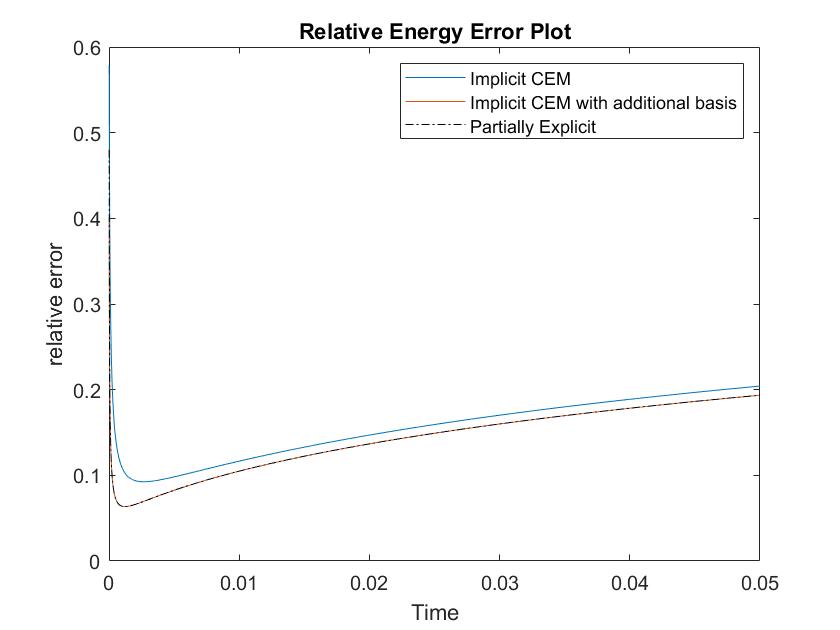}
}
\caption{Left: Relative $L^2$ error. 
Right: Relative energy error.}
\label{NRfig6}
\end{figure}

In this case, we use a new reaction term
\[g(u) =-(-\cfrac{10u}{u+2} +g_0). \]
We set $\Delta t = \frac{T}{4000}$. Figure \ref{NRfig7} shows $\kappa$ and $g_0$. We present the implicit fine grid solution (reference solution) at the final time, implicit CEM solution with additional basis at the final time and partially explicit solution at the final time in Figure \ref{NRfig8}. The relative $L^2$ error plot and relative energy error plot are presented in Figure \ref{NRfig9}. From these two relative error plots, we observe that the errors for partially explicit scheme resemble the errors for implicit CEM scheme with additional basis.
\begin{figure}[H]
\centering
\subfigure{
\includegraphics[width = 5cm]{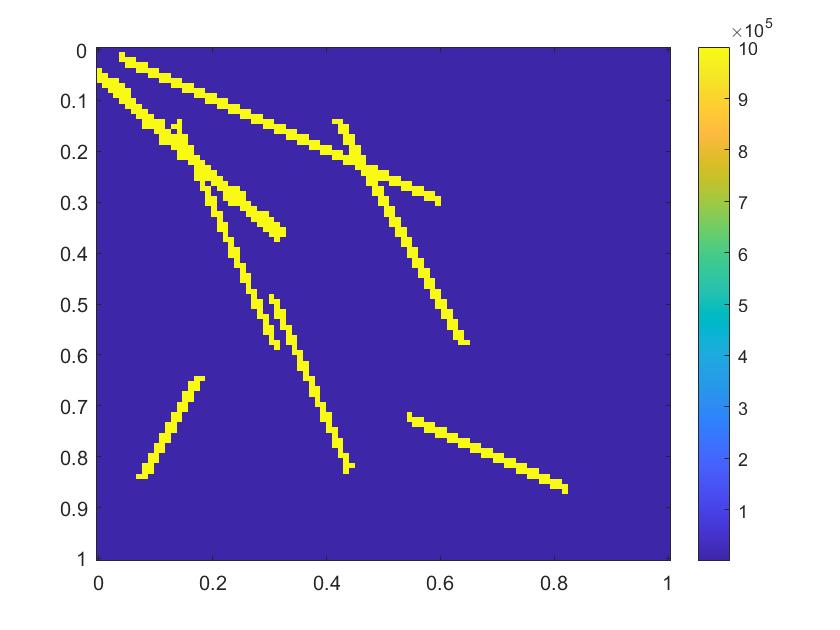}
}
\subfigure{
\includegraphics[width = 5cm]{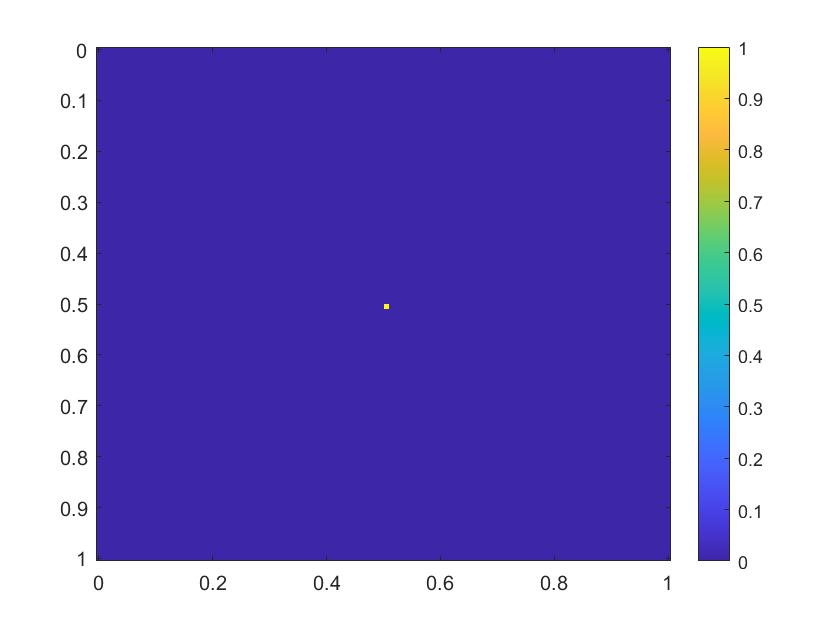}
}
\caption{Left: $\kappa$. Right: $g_0$.}
\label{NRfig7}
\end{figure}

\begin{figure}[H]
\centering
\subfigure{
\includegraphics[width = 5cm]{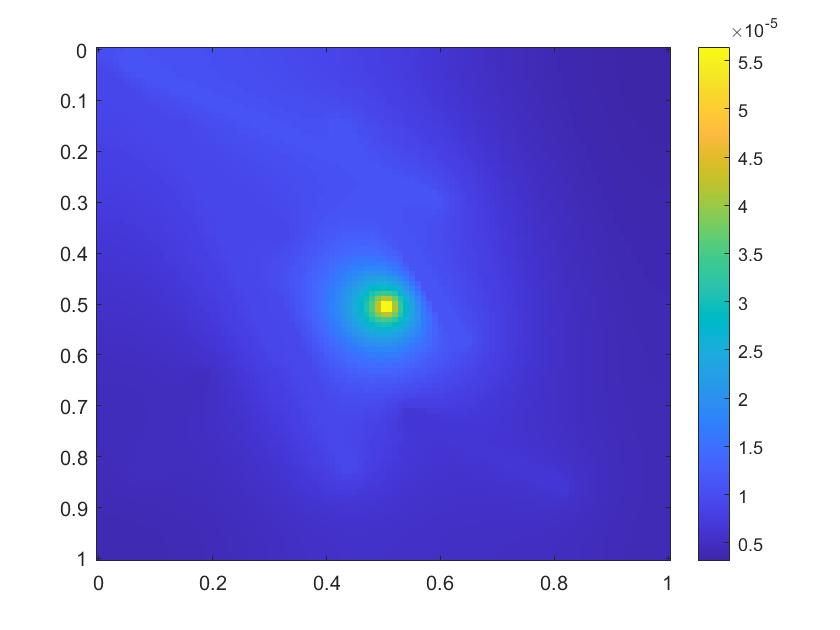}
}
\subfigure{
\includegraphics[width = 5cm]{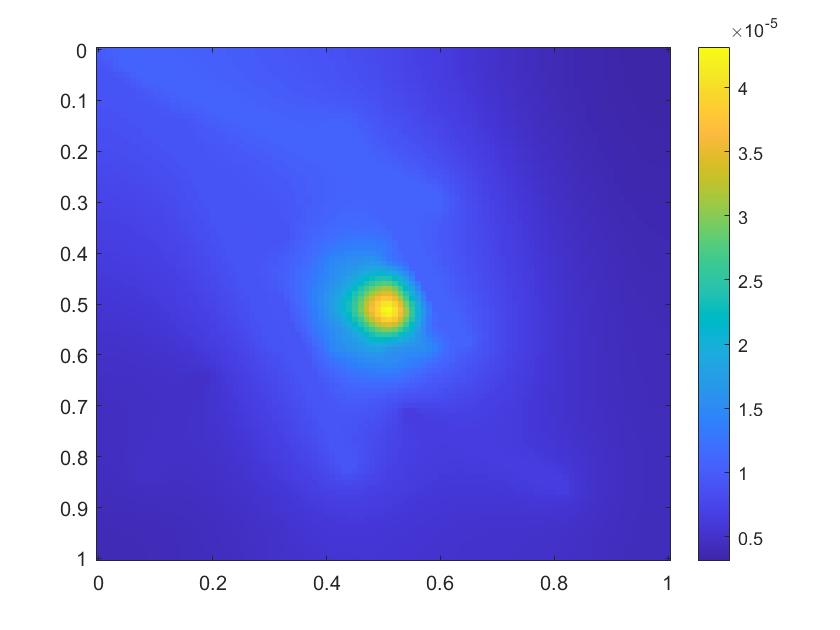}
}
\subfigure{
\includegraphics[width = 5cm]{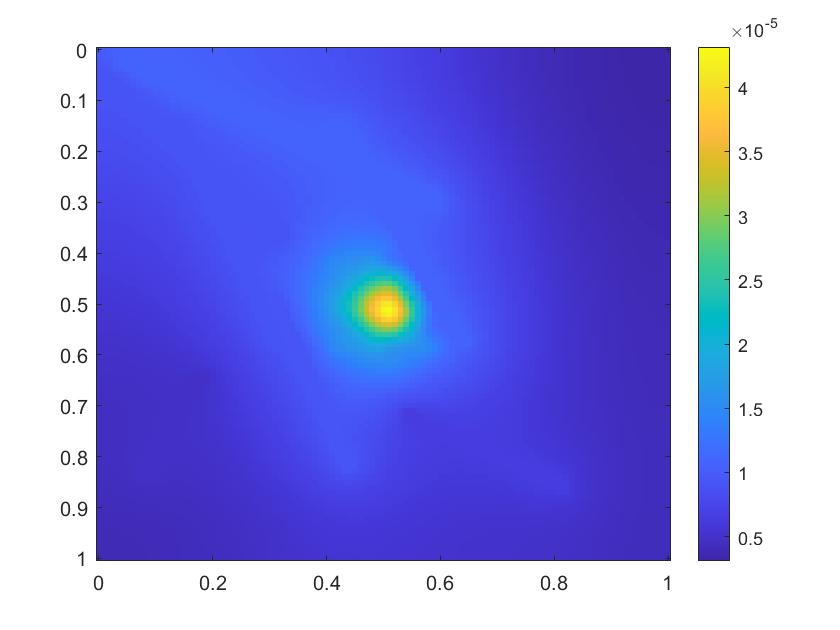}
}
\caption{Left: Implicit fine grid solution at $t=T$. 
Middle: Implicit CEM solution with additional basis at $t=T$. 
Right: Partially explicit solution at $t=T$.}
\label{NRfig8}
\end{figure}

\begin{figure}[H]
\centering
\subfigure{
\includegraphics[width = 6cm]{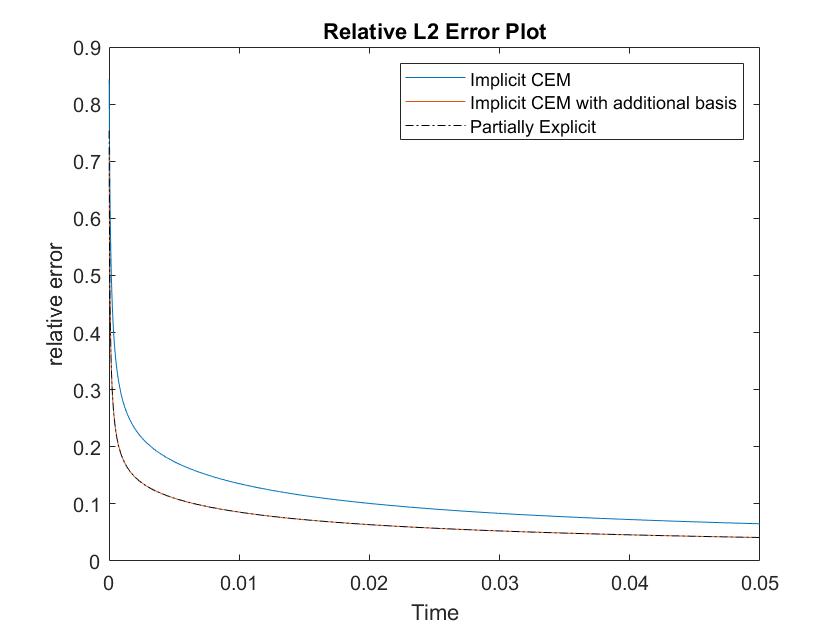}
}
\subfigure{
\includegraphics[width = 6cm]{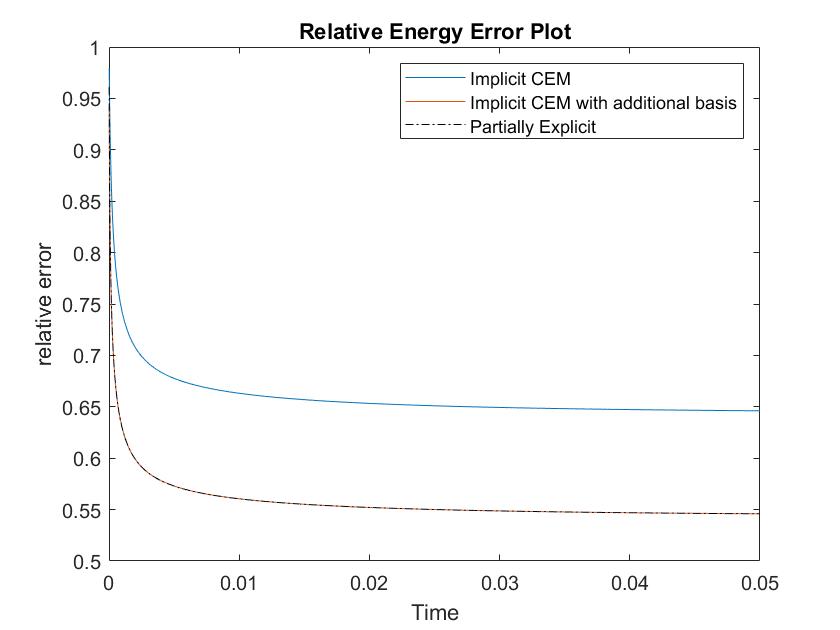}
}
\caption{Left: Relative $L^2$ error. 
Right: Relative energy error.}
\label{NRfig9}
\end{figure}

In this case, we let $g(u) =-(-\cfrac{10u}{u+2}+g_0)$ and we use a smooth source term $g_0$.
We set $\Delta t = \frac{T}{4000}$. The permeability field $\kappa$ and source term $g_0$ are presented in Figure \ref{NRfig10}. The implicit fine grid solution at $t=T$, the implicit CEM solution with additional basis at $t=T$ and the partially explicit solution at $t=T$ are shown in Figure \ref{NRfig11}. We present the relative $L^2$ error plot and the relative energy error plot in Figure \ref{NRfig12}. There is improvement when we add more basis from $V_{H,2}$, though not very considerable. We can also find that the accuracy for the partially explicit scheme is similar to that of the implicit scheme with additional basis. 
\begin{figure}[H]
\centering
\subfigure{
\includegraphics[width = 5cm]{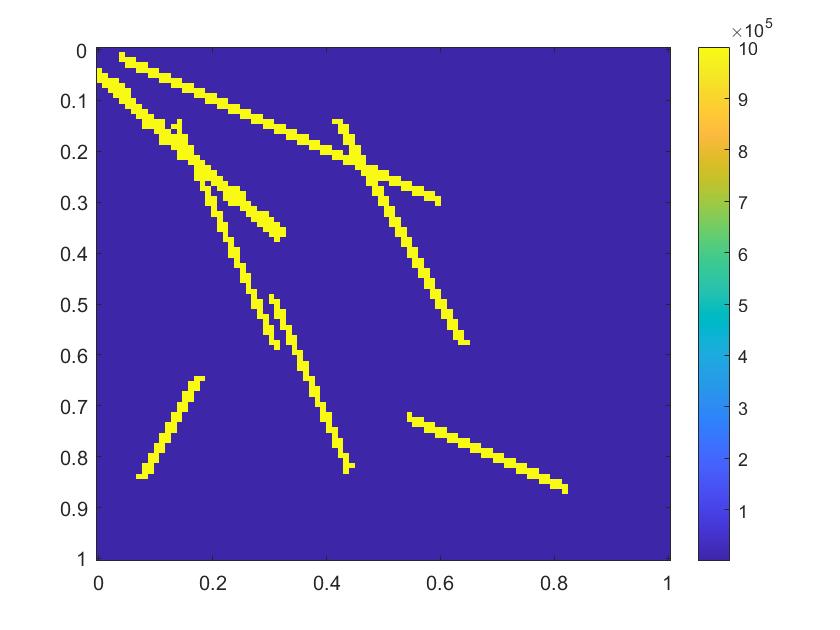}
}
\subfigure{
\includegraphics[width = 5cm]{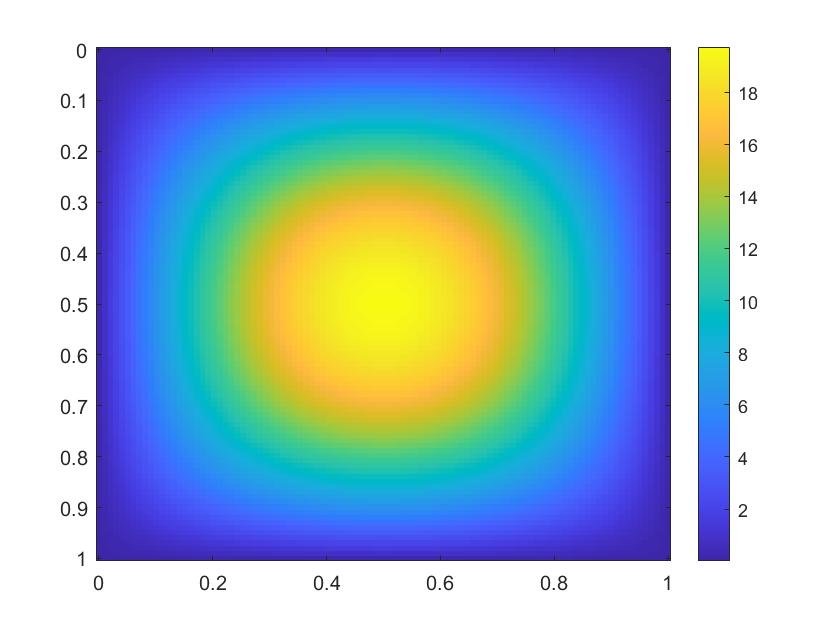}
}
\caption{Left: $\kappa$. Right: $g_0$.}
\label{NRfig10}
\end{figure}

\begin{figure}[H]
\centering
\subfigure{
\includegraphics[width = 5cm]{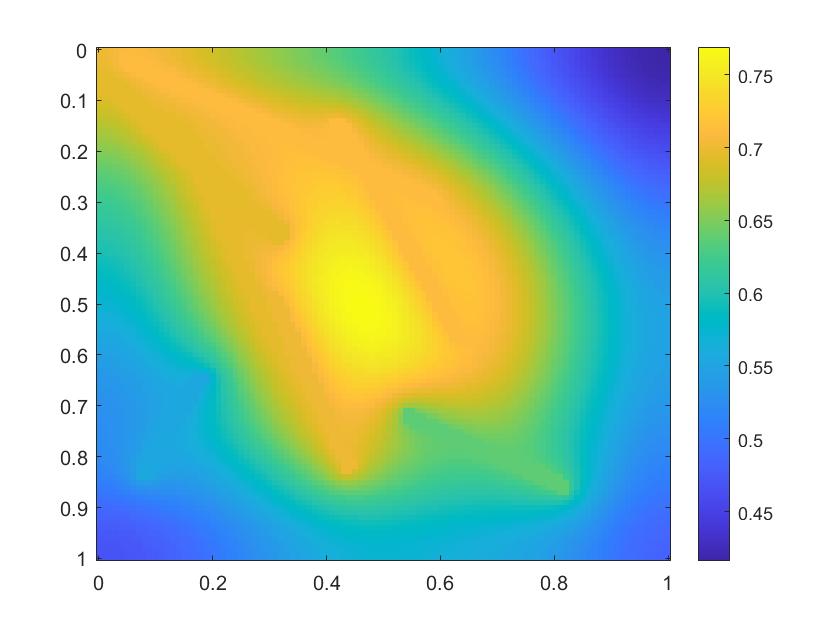}
}
\subfigure{
\includegraphics[width = 5cm]{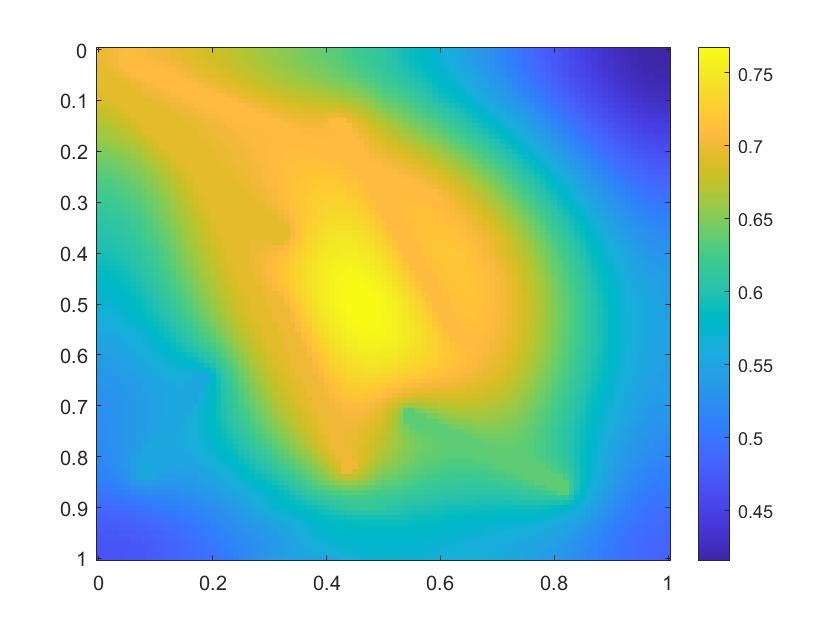}
}
\subfigure{
\includegraphics[width = 5cm]{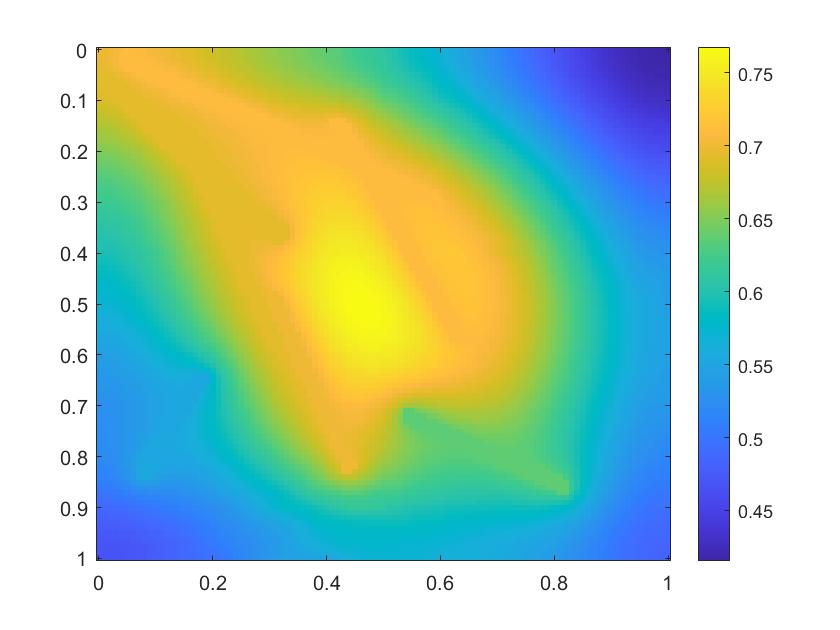}
}
\caption{Left: Implicit fine grid solution at $t=T$. 
Middle: Implicit CEM solution with additional basis at $t=T$. 
Right: Partially explicit solution at $t=T$.}
\label{NRfig11}
\end{figure}

\begin{figure}[H]
\centering
\subfigure{
\includegraphics[width = 6cm]{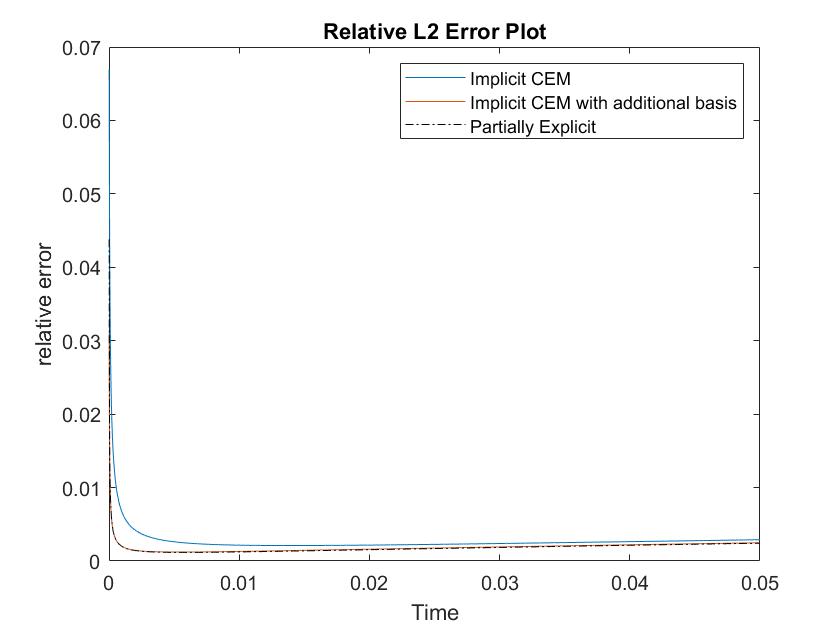}
}
\subfigure{
\includegraphics[width = 6cm]{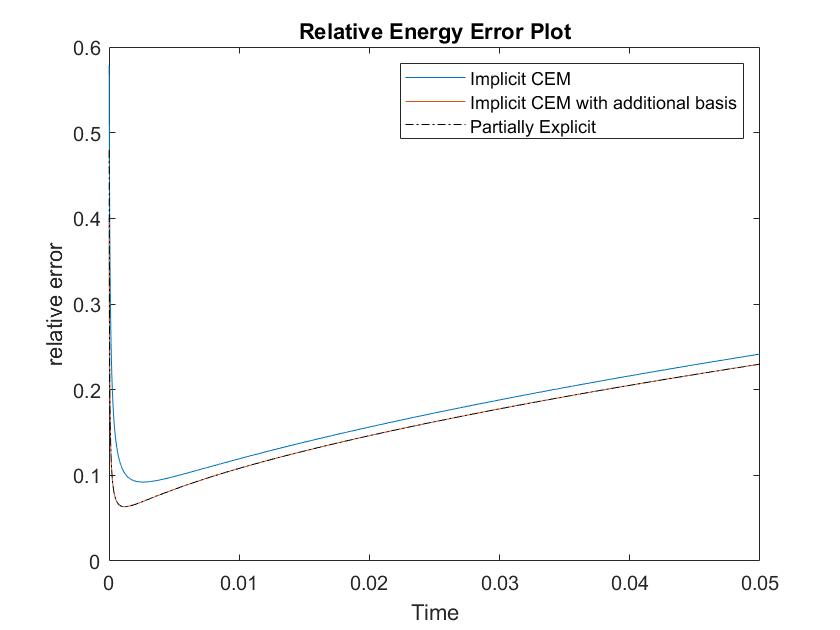}
}
\caption{Left: Relative $L^2$ error. 
Right: Relative energy error.}
\label{NRfig12}
\end{figure}

\subsection{Nonlinear Diffusion Operator $f(u)$}
In this section, we let 
\[f(u) = -\nabla\cdot (\kappa (1+u^2)\nabla u)\] 
and $g(u) = -(10u\cdot(u^2-1) + g_0 )$. 
We set $\Delta t = \frac{T}{8000}$ and $g_0$ is a smooth source term.
We present $\kappa$ and $g_0$ in Figure \ref{NRfig13}. The implicit fine grid solution at $t=T$, implicit CEM solution with additional basis at $t=T$ and partially explicit solution $t=T$ are presented in Figure \ref{NRfig14}. The relative $L^2$ error plot and the relative energy error plot are shown in Figure \ref{NRfig15}. From these plots, we can conclude that our proposed partially explicit scheme also works for nonlinear diffusion operators. The errors for implicit CEM scheme with additional basis are similar to the errors for partially explicit scheme, which implies that these two schemes have similar accuracy.
\begin{figure}[H]
\centering
\subfigure{
\includegraphics[width = 5cm]{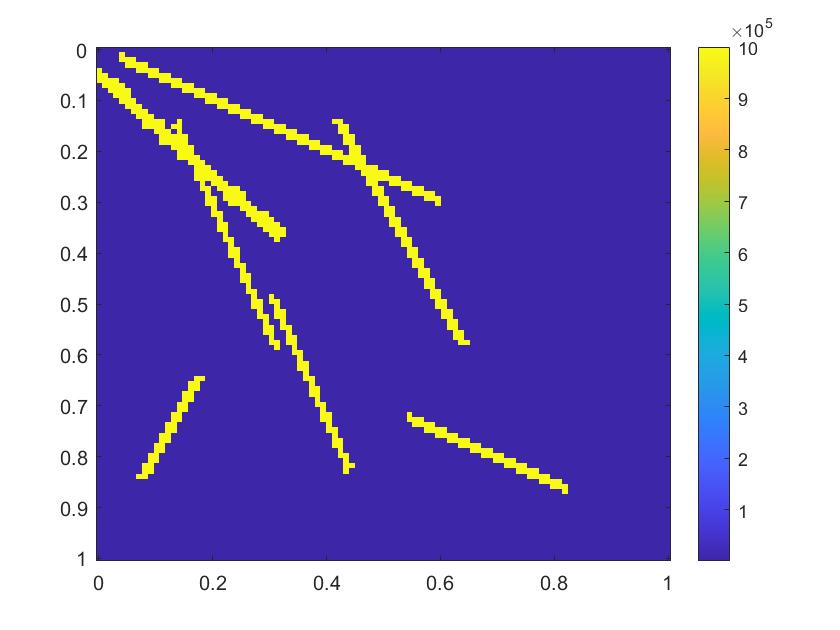}
}
\subfigure{
\includegraphics[width = 5cm]{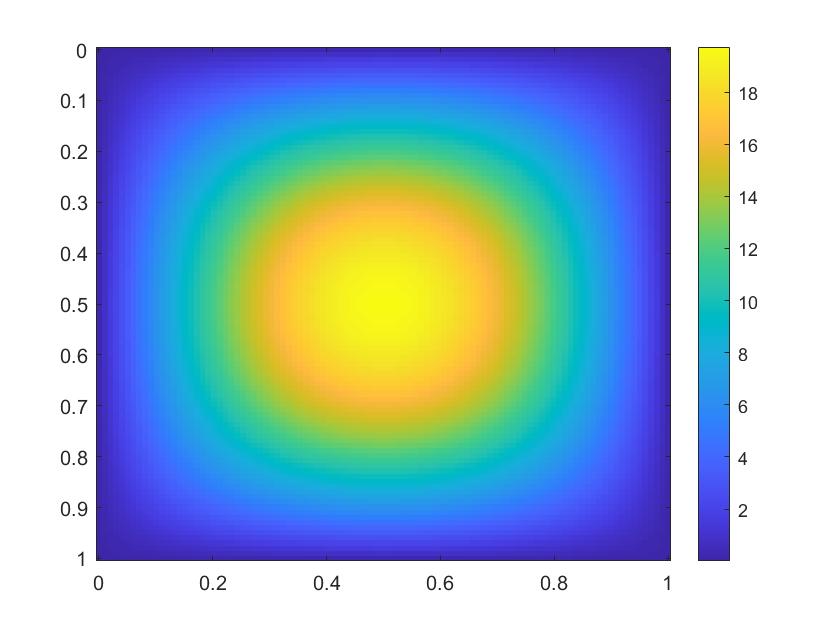}
}
\caption{Left: $\kappa$. Right: $g_0$.}
\label{NRfig13}
\end{figure}
\begin{figure}[H]
\centering
\subfigure{
\includegraphics[width = 5cm]{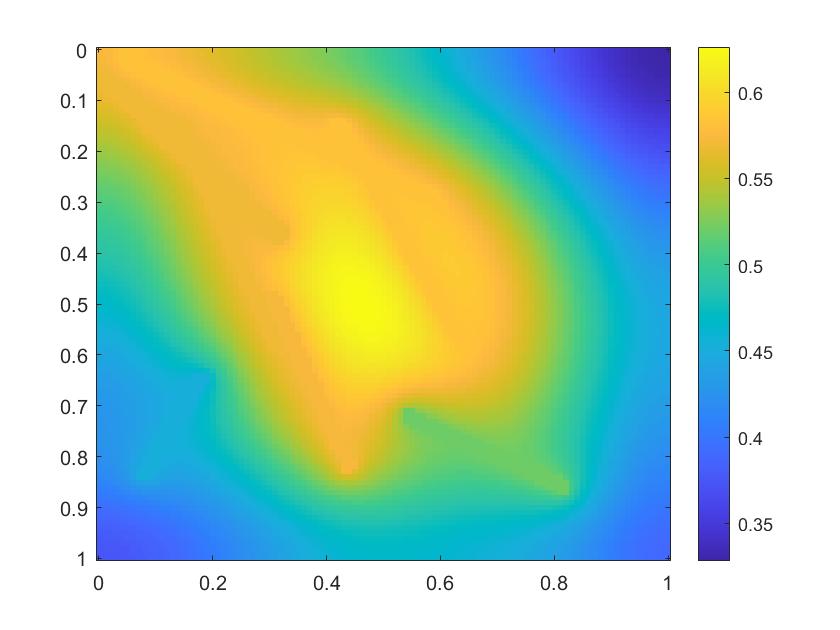}
}
\subfigure{
\includegraphics[width = 5cm]{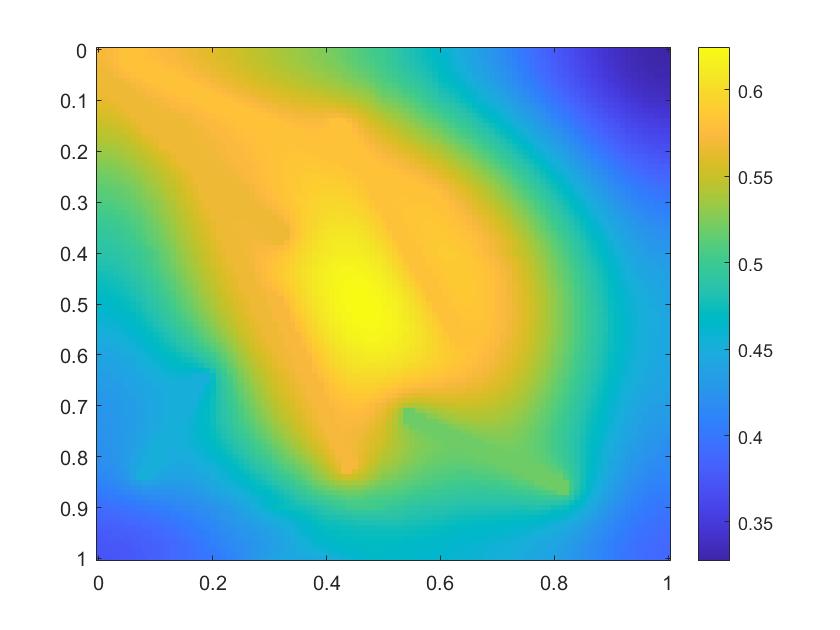}
}
\subfigure{
\includegraphics[width = 5cm]{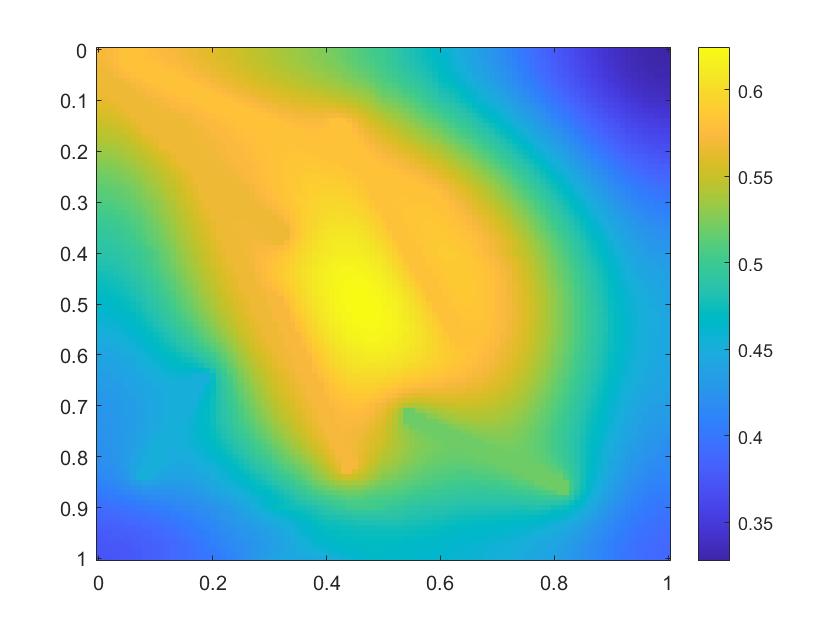}
}
\caption{Left: Implicit fine grid solution at $t=T$. 
Middle: Implicit CEM solution with additional basis at $t=T$. 
Right: Partially explicit solution at $t=T$.}
\label{NRfig14}
\end{figure}

\begin{figure}[H]
\centering
\subfigure{
\includegraphics[width = 6cm]{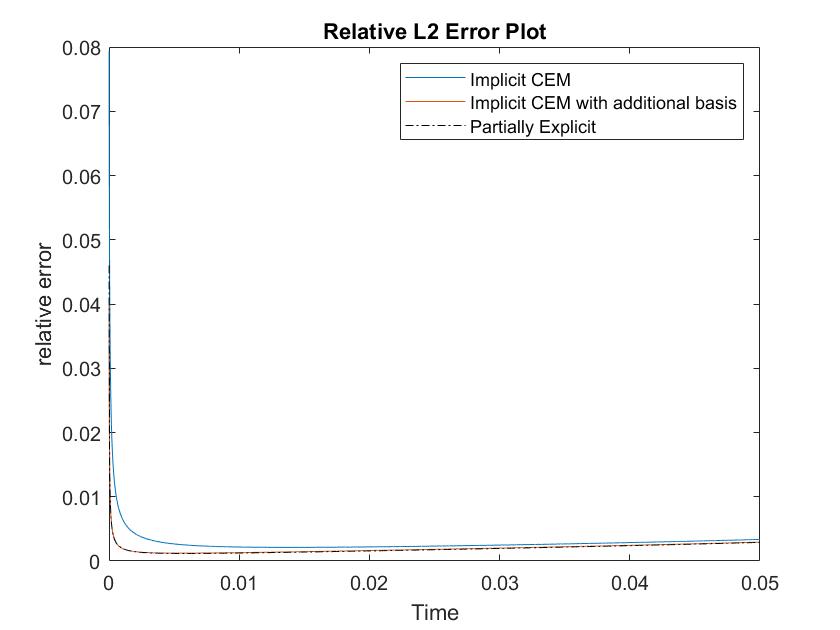}
}
\subfigure{
\includegraphics[width = 6cm]{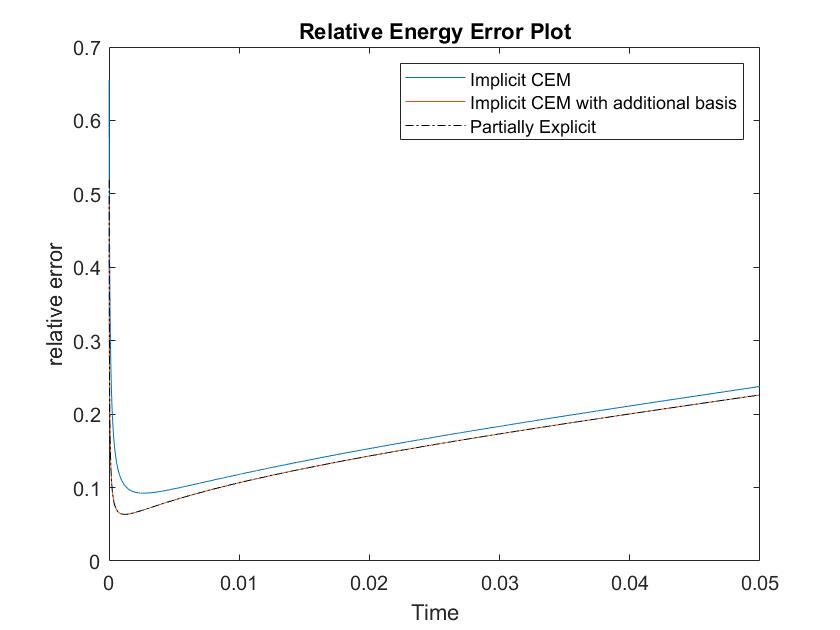}
}
\caption{Left: Relative $L^2$ error. 
Right: Relative energy error.}
\label{NRfig15}
\end{figure}

\section{Conclusions}
In this paper, we propose a partially explicit scheme to solve the time fractional nonlinear diffusion reaction equation. This work continues the earlier works on linear problems. The explicit scheme requires extremely small time step because of high contrast and time fractional power.  We formulate a condition on the spaces for the partially explicit scheme to be stable. We introduce a way to split $V_H$ and construct $V_{H,1}$ and $V_{H,2}$ which satisfy the stability condition. We find that the time step in the partially explicit scheme scales as the coarse mesh size, which provides computational savings. The main advantage 
of our scheme is that it handles additional degrees of freedom
(those beyond few coarse-grid degrees of freedom) explicitly for
nonlinear diffusion and reaction terms.
From the numerical experiments, we conclude that the partially explicit scheme can obtain similar accuracy as the fully implicit CEM scheme with additional basis.

\bibliographystyle{abbrv}
\bibliography{references,references4,references1,references2,references3,decSol,referencesFD}

\end{document}